\documentclass[11pt,a4paper,onesided]{article}

\usepackage{ amssymb, amsmath, amsthm, graphics, xspace, enumerate}
\usepackage{graphicx}
\usepackage[a4paper,colorlinks=true,citecolor=blue,urlcolor=blue,linkcolor=blue,bookmarksopen=true,unicode=true,pdffitwindow=true]{hyperref}
\oddsidemargin 0in \evensidemargin 0in \topmargin -0.6in \textwidth
6.5in \textheight 9in

 \hypersetup{pdfauthor={Nikola Sandri\'{c}}}
\hypersetup{pdftitle={Long-time behavior  of stable-like processes}}

\newtheorem{tm}{tm}[section]
\newtheorem{theorem}[tm]{Theorem}
\newtheorem{lemma}[tm]{Lemma}
\newtheorem{corollary}[tm]{Corollary}

\newtheorem{definition}[tm]{Definition}

\newcommand {\R} {\ensuremath{\mathbb{R}}}
\newcommand {\ZZ} {\ensuremath{\mathbb{Z}}}
\newcommand {\N} {\ensuremath{\mathbb{N}}}
\newcommand {\CC} {\ensuremath{\mathbb{C}}}
\newcommand{\process}[1]{\{#1_t\}_{t\geq0}}
\newcommand{\chain}[1]{\{#1_n\}_{n\geq0}}
\numberwithin{equation}{section}
\def\be{\begin{equation}}
\def\ee{\end{equation}}
\begin{document}

 \title{Long-time behavior of stable-like processes}
 \author{Nikola Sandri\'{c}\\ Department of Mathematics\\
         Faculty of Civil Engineering, University of Zagreb, Zagreb,
         Croatia \\
        Email: nsandric@grad.hr }

 \maketitle
\begin{center}
{
\medskip

} \end{center}

\begin{abstract}
In this paper, we consider a long-time behavior of  stable-like
processes. A stable-like process is a  Feller process given by the
symbol $p(x,\xi)=-i\beta(x)\xi+\gamma(x)|\xi|^{\alpha(x)},$ where
$\alpha(x)\in(0,2)$, $\beta(x)\in\R$ and $\gamma(x)\in(0,\infty)$.
More precisely, we give sufficient conditions for recurrence,
transience and ergodicity of stable-like processes in terms of the
stability function $\alpha(x)$,  the drift function $\beta(x)$ and
the scaling function $\gamma(x)$. Further, as a special case of
these results we give a new proof for the recurrence and transience
property of one-dimensional symmetric stable L\'{e}vy processes with
the index of stability $\alpha\neq1.$

\end{abstract}

{\small \textbf{Keywords and phrases:} ergodicity, Foster-Lyapunov
criteria, Harris recurrence,   recurrence, stable-like process,
transience}

%
%
%
%


\section{Introduction}

\quad \ \ The recurrence and transience property  of L\'{e}vy
processes, and in particular of stable L\'{e}vy processes, has been
completely studied in \cite[Chapter 7]{sato-book}. In this paper, we
consider  long-time behavior
 of stable-like processes.
Let $\alpha:\R\longrightarrow(0,2)$, $\beta:\R\longrightarrow\R$ and
$\gamma:\R\longrightarrow(0,\infty)$ be arbitrary bounded and
continuously differentiable functions with bounded derivatives, such
that $0<\inf\{\alpha(x):x\in\R\}\leq\sup\{\alpha(x):x\in\R\}<2$ and
$0<\inf\{\gamma(x):x\in\R\}$. Under this assumptions, R. Bass
\cite{bass-stablelike}, R.L.
                                                         Schilling
                                                         and J. Wang
                                                         \cite[Theorem 3.3.]{rene-wang-feller}
                                                         have shown
                                                         that there
                                                         exists a unique
                                                         Feller
                                                         process
                                                         called a
                                                         \emph{stable-like
                                                         process},
                                                         which we
                                                         denote by
                                                         $\process{X^{\alpha}}$,
                                                         given by
                                                         the
                                                         following
                                                         infinitesimal
                                                         generator \begin{align}\label{eq:1.1}\mathcal{A}^{\alpha}f(x)=\beta(x) f'(x)+\int_{\R}\left(f(y+x)-f(x)-f'(x)y1_{\{z:|z|\leq1\}}(y)\right)\frac{c(x)}{|y|^{\alpha(x)+1}}dy,\end{align}
                                                where
                                                $$c(x)=\gamma(x)\frac{\alpha(x)
                 2^{\alpha(x)-1}\Gamma(\frac{\alpha(x)+1}{2})}{\pi^{\frac{1}{2}}\Gamma(1-\frac{\alpha(x)}{2})}.$$
  Clearly, the symbol of this process is given by
$p(x,\xi)=-i\beta(x)\xi+\gamma(x)|\xi|^{\alpha(x)}.$ The aim of this
paper is to find sufficient conditions for  recurrence, transience
and ergodicity of  stable-like processes. The main tool used in
proving these conditions is  the Foster-Lyapunov criteria for
general Markov processes, which were developed in
\cite{meyn-tweedie-III}.

Long-time behavior of stable-like processes has already been
considered in  literature. Clearly, in the case when $\alpha(x)$,
$\beta(x)$ and $\gamma(x)$ are constant functions, the stable-like
process $\process{X^{\alpha}}$ becomes  one-dimensional symmetric
stable L\'{e}vy process with the drift. By the Chung-Fuchs criterion
(see \cite[Corollary 37.17]{sato-book}), its recurrence and
transience property depends only on the index of stability
$\alpha\in(0,2]$ and the drift $\beta\in\R$. More precisely,
one-dimensional symmetric stable L\'{e}vy process with the drift is
recurrent if and only if either
  $1<\alpha\leq2$ and $\beta=0$ or $\alpha=1$. In this paper, in the  case  when $\alpha\neq1$ and $\beta=0$, we prove the same fact by using a different
technique.

In the general case, R.L. Schilling and J. Wang \cite[Theorem 1.1
(ii)]{rene-wang-feller}    have developed a Chung-Fuchs type
condition for transience for ``nice" Feller processes. By applying
this condition to the stable-like process $\process{X^{\alpha}}$,
they have
 shown that  $\limsup_{|x|\longrightarrow\infty}\alpha(x)<1$ is a sufficient condition for transience.
In this paper, we relax the assumption
$\limsup_{|x|\longrightarrow\infty}\alpha(x)<1$, that is,  we give a
sufficient condition for transience without any further assumptions
on the function $\alpha(x)$.

Next, J. Wang \cite{wang-ergodic} has given sufficient conditions
for recurrence and ergodicity of general one-dimensional L\'{e}vy
type Feller processes, that is, the Feller processes given by the
following infinitesimal generator
\begin{align*}\mathcal{A}f(x)=b(x)f'(x)+\frac{1}{2}a(x)f''(x)+\int_{\R}\left(f(y+x)-f(x)-f'(x)y1_{\{z:|z|\leq1\}}(y)\right)\nu(x,dy),\end{align*}
where  $a(x)\geq0$ and $b(x)\in\R$  are Borel measurable functions
and $\nu(x,\cdot)$ is a $\sigma$-finite Borel kernel on $\R\times
\mathcal{B}(\R)$, such that $\nu(x,\{0\})=0$ and $\int_{\R}(1\wedge
y^{2})\nu(x,dy)<\infty$ holds for all $x\in\R$. By applying these
conditions to the stable-like case, he has shown that
$$\textrm{if}\quad \alpha(x)>1\quad \textrm{and}\quad \frac{c(x)}{\alpha(x)-2}+x\beta(x)+\frac{2c(x)|x|^{2-\alpha(x)}}{\alpha(x)-1}\leq0$$  for all $|x|$ large enough,
then the corresponding stable-like process  is recurrent (see
\cite[Theorem 1.4 (i)]{wang-ergodic}).  Note that the above
recurrence condition does not cover the zero drift case and the case
when $\alpha(x)\leq1$. In particular, it does not cover a symmetric
stable L\'{e}vy process case. Further, conditions for ergodicity
presented in that paper do not cover the stable-like case (see
\cite[Theorem 1.4 (ii)]{wang-ergodic}).
  In this
paper, we give a sufficient condition for recurrence without any
further assumptions on the function $\alpha(x)$ and a sufficient
condition for ergodicity in the case when
$1<\inf\{\alpha(x):x\in\R\}$.

Furthermore, in the case when $\alpha(x)$ and $\gamma(x)$ are
periodic functions with the same period and when $\beta(x)=0,$ B.
Franke \cite{franke-periodic, franke-periodicerata} has shown that
the recurrence and transience property of the corresponding
stable-like process  depends only on  the minimum of the function
$\alpha(x)$, that is, if the set
$\{x\in\R:\alpha(x)=\alpha_0:=\inf_{x\in\R}\alpha(x)\}$ has positive
Lebesgue measure, then the corresponding stable-like process
 is recurrent if and only if $\alpha_0\geq 1.$

Finally, if the functions $\alpha(x)$, $\beta(x)$ and $\gamma(x)$
are of the form
\begin{align*}\alpha(x)=\left\{\begin{array}{cc}
                                                      \alpha, & x<-k \\
                                                      \beta, & x>k,
                                                    \end{array}\right.   \quad \beta(x)=0\quad\textrm{and}\quad  \gamma(x)=\left\{\begin{array}{cc}
                                                      \gamma, & x<-k \\
                                                      \delta,&
                                                      x>k,
                                                    \end{array}\right.\end{align*}
                                                    where
                                                    $\alpha,
                                                    \beta\in(0,2)$,
                                                    $\gamma,\delta\in(0,\infty)$
                                                    and $k>0$,  then by using an
                                                    overshoot
                                                    approach,
                                                    B. B\"ottcher
                                                    \cite{bjoern-overshoot} has shown that
                                                    the corresponding stable-like process
 is recurrent if and only if
$\alpha+\beta\geq2.$

For the Dirichlet forms approach to the problem of recurrence and
transience of stable-like processes without the drift term we refer
the reader to \cite{uemura1, uemura2}, and for the discrete-time
analogous of stable-like processes and their recurrence and
transience property we refer the reader to \cite{sandric-rectrans,
sandric-periodic}.

Now, let us state the main results of this paper.

\begin{theorem}\label{tm1.1}
\begin{enumerate}
  \item [(i)] Let
  $\liminf_{|x|\longrightarrow\infty}\alpha(x)\geq1.$
 If \begin{align}\label{eq:1.2}\limsup_{|x|\longrightarrow\infty}\left(\rm sgn(\it {x})\frac{\alpha(x)}{c(x)}|x|^{\alpha(x)-\rm{1}}\beta(x)+\pi\rm{ctg}\left(\frac{\pi\alpha(\it{x})}{\rm{2}}\right)\right)<\rm{0},\end{align}
 then the corresponding stable-like process $\process{X^{\alpha}}$
  is recurrent.

  \item [(ii)] If
\begin{align}\label{eq:1.3}\liminf_{|x|\longrightarrow\infty}\left(\rm
sgn(\it
{x})\frac{\alpha(x)}{c(x)}|x|^{\alpha(x)-\rm{1}}\beta(x)+\pi\rm{ctg}\left(\frac{\pi\alpha(\it{x})}{\rm{2}}\right)\right)>\rm{0},\end{align}
then the corresponding stable-like process $\process{X^{\alpha}}$
  is transient.
\end{enumerate}
\end{theorem}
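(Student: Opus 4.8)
\medskip
\noindent\emph{Proof strategy.}\ The plan is to deduce both assertions from the Foster--Lyapunov drift criteria for general Markov processes in \cite{meyn-tweedie-III}. As a preliminary one checks that $\process{X^{\alpha}}$ is a conservative (non-explosive) Feller process whose transition kernels admit strictly positive densities; this makes it Lebesgue-irreducible with all compact sets petite, so that the abstract criteria apply and ``recurrent''/``transient'' in the sense of \cite{meyn-tweedie-III} coincide with the usual notions. Recurrence (part (i)) then follows once one exhibits a \emph{norm-like} function $V_1$ (i.e.\ $V_1(x)\to\infty$ as $|x|\to\infty$) in the extended domain of $\mathcal{A}^{\alpha}$ with $\mathcal{A}^{\alpha}V_1(x)\le0$ for all $x$ outside some compact set; transience (part (ii)) follows once one exhibits a \emph{bounded} function $V_2$ in the extended domain, nondecreasing in $|x|$ for $|x|$ large, with $\mathcal{A}^{\alpha}V_2(x)\ge0$ for $x$ outside a compact set $C$ and $\sup_{x\notin C}V_2(x)>\sup_{x\in C}V_2(x)$.

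For the test functions I would take smooth versions of $V_1(x)=(1+x^2)^{\delta/2}$ and $V_2(x)=-(1+x^2)^{-\theta/2}$, with small $\delta,\theta>0$ to be fixed later (logarithmic choices such as $\ln(1+x^2)$ and $-1/\ln(1+x^2)$ would serve equally well); both are $C^2$, and one first verifies that \eqref{eq:1.1} computes $\mathcal{A}^{\alpha}V_i$ pointwise and that Dynkin's formula up to exit times from balls is valid for them, by a routine localization based on the estimates below. The core of the argument is the asymptotic expansion of $\mathcal{A}^{\alpha}V_i(x)$ as $|x|\to\infty$. Substituting $y=xz$ in the integral in \eqref{eq:1.1} and splitting the domain into $\{|y|\le1\}$ (where a second-order Taylor expansion of $V_i$ gives a contribution of order $|x|^{-2}$), $\{|x+y|\le1\}$ (a contribution of order $|x|^{-\alpha(x)-1}\log|x|$), and the remainder, one is led to an identity of the form
\[
\mathcal{A}^{\alpha}V_1(x)=\kappa(x)\left(\mathrm{sgn}(x)\,\frac{\alpha(x)}{c(x)}|x|^{\alpha(x)-1}\beta(x)+\pi\,\mathrm{ctg}\left(\frac{\pi\alpha(x)}{2}\right)+o(1)\right),\qquad |x|\to\infty,
\]
with $\kappa(x)=\delta(1+x^2)^{\delta/2}c(x)/(\alpha(x)|x|^{\alpha(x)})>0$, and an entirely analogous identity for $\mathcal{A}^{\alpha}V_2$ (with $\delta$ replaced by $\theta$ and the same bracket). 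Here the drift term $\beta(x)V_i'(x)$ is responsible for the summand of order $|x|^{-1}$ and the nonlocal term for the summand of order $|x|^{-\alpha(x)}$; the constant $\pi\,\mathrm{ctg}(\pi\alpha(x)/2)$ comes out of the classical evaluation $\mathrm{p.v.}\int_{\R}\ln|1+z|\,|z|^{-\alpha-1}\,dz=\frac{\pi}{\alpha}\,\mathrm{ctg}\left(\frac{\pi\alpha}{2}\right)$ (by contour integration; for $V_2$ one uses its $\theta$-analogue $\mathrm{p.v.}\int_{\R}\bigl(1-|1+z|^{-\theta}\bigr)|z|^{-\alpha-1}dz=\theta\,\frac{\pi}{\alpha}\,\mathrm{ctg}\left(\frac{\pi\alpha}{2}\right)+o(\theta)$, uniformly for $\alpha$ in the compact range of $\alpha(\cdot)$). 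Granting this, \eqref{eq:1.2} forces the bracket, hence $\mathcal{A}^{\alpha}V_1(x)$, to be strictly negative for all large $|x|$, giving the recurrence drift inequality; in part (i) the hypothesis $\liminf_{|x|\to\infty}\alpha(x)\ge1$ is precisely what keeps the two competing scales $|x|^{-\alpha(x)}$ and $|x|^{-1}$ comparable and the error genuinely $o(1)$ relative to the bracket. Dually, \eqref{eq:1.3} makes $\mathcal{A}^{\alpha}V_2(x)\ge0$ for all large $|x|$; since $V_2$ is bounded, strictly increasing in $|x|$ for $|x|$ large and tends to $0$ at infinity, the requirement $\sup_{x\notin C}V_2>\sup_{x\in C}V_2$ holds for a suitable ball $C$, and \cite{meyn-tweedie-III} yields transience.

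The main obstacle is the uniform asymptotic analysis of $\mathcal{A}^{\alpha}V_i(x)$: the three pieces of the split integral must be controlled uniformly in $x$ despite the variable coefficients $\alpha(x),\beta(x),\gamma(x)$, both competing orders $|x|^{-\alpha(x)}$ and $|x|^{-1}$ must be retained (discarding either one would lose part of the bracket), and the principal-value integral producing $\mathrm{ctg}(\pi\alpha(x)/2)$ must be evaluated and estimated with error uniform in $\alpha(x)$. A secondary technical point in part (ii) is the interplay of the two limiting procedures: one first chooses $\theta$ small so that the $o(\theta)$ correction is negligible against the $\liminf$ in \eqref{eq:1.3}, and only then lets $|x|\to\infty$, the latter error being uniform in the already-fixed $\theta$. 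By comparison, the reduction to the drift criteria and the preliminary structural facts (Feller property, existence of transition densities, Lebesgue-irreducibility, petiteness of compact sets) are routine.
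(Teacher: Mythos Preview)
Your strategy is essentially the paper's: apply the Foster--Lyapunov drift criteria (the paper's Theorem~\ref{tm2.3}) with radial test functions and reduce everything to the sign, for large $|x|$, of the single bracket combining the drift scale $|x|^{-1}$ and the nonlocal scale $|x|^{-\alpha(x)}$. The paper takes $V(x)=\ln(1+|x|)$ for (i) and $V(x)=1-(1+|x|)^{-\theta}$ for (ii), but it explicitly remarks (after Theorem~\ref{tm1.2}) that a small power $|x|^{\theta}$ works just as well for (i) and that the associated constant $E(\alpha,\theta)$ tends to $\pi\,\mathrm{ctg}(\pi\alpha/2)$ as $\theta\to0$; so your $(1+x^{2})^{\delta/2}$ variant is equivalent. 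The main difference is computational: where you propose the scaling $y=xz$ and a contour/principal-value evaluation of $\int_{\R}\ln|1+z|\,|z|^{-\alpha-1}dz$, the paper instead splits the large-jump integral $C(x)$ into several explicit pieces and evaluates them via Gauss hypergeometric identities (formula~\eqref{eq:3.7}) together with the digamma reflection formula, then takes limits term by term (relations~\eqref{eq:3.11}--\eqref{eq:3.21}). Your route is shorter and more conceptual; the paper's is more explicit and keeps every error term visible, which makes the uniformity in $\alpha(x)\in[\inf\alpha,\sup\alpha]\Subset(0,2)$ transparent.

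A few small corrections. Your $V_{2}=-(1+x^{2})^{-\theta/2}$ is negative, while the transience criterion (Theorem~\ref{tm2.3}(ii)) asks for $V\ge0$; shift by $1$ as the paper does. The level-set condition you want is $\sup_{x\in C}V<\inf_{x\in D}V$ for some closed $D\subset C^{c}$, not $\sup_{x\notin C}V>\sup_{x\in C}V$; with a monotone $V_{2}$ this is automatic once $C$ is a suitable ball. Finally, your reading of the hypothesis $\liminf_{|x|\to\infty}\alpha(x)\ge1$ is slightly off: the paper notes this assumption is \emph{not} needed for the asymptotic expansion to hold---the bracket formula and the $o(1)$ error are valid regardless---but only to allow condition~\eqref{eq:1.2} to be satisfiable at all (for $\alpha(x)<1$ the term $\pi\,\mathrm{ctg}(\pi\alpha(x)/2)$ is positive and dominates).
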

The following  constant will appear in the statement of the
following theorem. For $\alpha\in(0,2)$ and $\theta\in(0,\alpha)$
let
$$E(\alpha,\theta):=\frac{\alpha}{\theta}\sum_{i=1}^{\infty}{\theta
\choose 2i}\frac{2}{2i-\alpha}-\frac{2}{\theta}+\frac{\alpha\,
_2F_1\left(-\theta,\alpha-\theta,1+\alpha-\theta;-1\right)+\alpha\,
_2F_1\left(-\theta,\alpha-\theta,1+\alpha-\theta;1\right)}{\theta(\alpha-\theta)},$$
where ${z \choose n}$ is the binomial coefficient and
$_2F_1\left(a,b,c;z\right)$ is the Gauss hypergeometric function
(see Section 3 for the definition of this function).
\begin{theorem}\label{tm1.2} Let $1<\alpha:=\inf\{\alpha(x):x\in\R\}.$ If
\begin{align}\label{eq:1.4}\limsup_{\theta\longrightarrow\alpha}\limsup_{|x|\longrightarrow\infty}\left(sgn(\it{x})\frac{\alpha(\it{x})}{c(\it{x})}|x|^{\alpha(x)-\rm{1}}\rm
\beta(\it{x})+\frac{\alpha(\it{x})}{\theta
c(\it{x})}|x|^{\alpha(x)-\theta}+E(\alpha(x),\theta)\right)<\rm{0},\end{align}
 then the corresponding stable-like process
$\process{X^{\alpha}}$ is ergodic.
\end{theorem}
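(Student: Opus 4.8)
The plan is to establish ergodicity via the Foster--Lyapunov (Meyn--Tweedie) criteria for continuous-time Markov processes: it suffices to produce a norm-like function $V\colon\R\to[0,\infty)$, a compact set $K\subset\R$, and constants $b<\infty$, $c>0$ with $\mathcal A^{\alpha}V(x)\leq -c + b\,1_K(x)$ for all $x$ in the domain, together with irreducibility and (aperiodic) petiteness of compact sets for $\process{X^{\alpha}}$. The irreducibility, $T$-process property and petiteness of compacts for stable-like processes should already be available from the Feller/strong-Feller structure of $\process{X^{\alpha}}$ (as used for the recurrence part of Theorem \ref{tm1.1}); so the real work is the drift inequality. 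Following the recurrence argument, I would test $\mathcal A^{\alpha}$ against functions that behave like $|x|^{\theta}$ for large $|x|$ (with $0<\theta<\alpha$, suitably smoothed near the origin so that $V\in C^2$ and belongs to the extended domain). The exponent $\theta<\alpha=\inf\alpha(x)$ is exactly what makes the fractional integral $\int_{\R}\bigl(|y+x|^{\theta}-|x|^{\theta}-\theta\,\mathrm{sgn}(x)|x|^{\theta-1}y\,1_{\{|y|\le1\}}\bigr)\,c(x)|y|^{-\alpha(x)-1}\,dy$ converge.

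The crux is an exact asymptotic evaluation of $\mathcal A^{\alpha}V(x)$ as $|x|\to\infty$ for $V(x)\sim|x|^{\theta}$. Splitting the Lévy integral at scale $|x|$, substituting $y=|x|u$, and expanding, the drift term contributes $\beta(x)\,\theta\,\mathrm{sgn}(x)|x|^{\theta-1}$, while the jump term produces $c(x)|x|^{\theta-\alpha(x)}$ times a dimensionless integral $\int_{\R}\bigl(|1+u|^{\theta}-1-\theta u\bigr)|u|^{-\alpha(x)-1}\,du$ plus correction terms coming from the truncation cutoff $1_{\{|y|\le1\}}$ versus the natural cutoff $1_{\{|u|\le1\}}$; the latter discrepancy is $O(|x|^{\theta-\alpha(x)})$ as well (it is a convergent integral over $|y|\le1$ against $|y|^{-\alpha(x)-1}$ of $\theta\,\mathrm{sgn}(x)|x|^{\theta-1}y$, giving $|x|^{\theta-1}$, subdominant once $\alpha(x)>1$). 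Writing $\int_{\R}\bigl(|1+u|^{\theta}-1-\theta u\bigr)|u|^{-\alpha-1}\,du$ in closed form is precisely where the constant $E(\alpha,\theta)$ enters: one symmetrizes $u\mapsto -u$, expands $|1+u|^{\theta}$ in a binomial series on $|u|<1$ (only even powers survive, yielding the $\sum_i\binom{\theta}{2i}\tfrac{2}{2i-\alpha}$ piece), handles the $|u|\ge1$ region by the substitution $u\mapsto1/u$ turning it into two Gauss hypergeometric integrals $_2F_1(-\theta,\alpha-\theta,1+\alpha-\theta;\mp1)$, and collects the elementary $-2/\theta$ term from the $-1-\theta u$ subtraction. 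After normalizing by the factor $\alpha(x)/(\theta c(x))$ carried along $V$, one gets
\[
\frac{\alpha(x)}{\theta c(x)}\,|x|^{1-\theta}\,\mathcal A^{\alpha}V(x)=\mathrm{sgn}(x)\frac{\alpha(x)}{c(x)}|x|^{\alpha(x)-1}\beta(x)+\frac{\alpha(x)}{\theta c(x)}|x|^{\alpha(x)-\theta}+E(\alpha(x),\theta)+o(1),
\]
up to the harmless normalization by $|x|^{\alpha(x)-\theta}$ and lower-order remainders; hypothesis \eqref{eq:1.4} says the $\limsup_{\theta\to\alpha}\limsup_{|x|\to\infty}$ of the bracketed quantity is strictly negative, so for $\theta$ close enough to $\alpha$ and $|x|$ large enough, $\mathcal A^{\alpha}V(x)\leq -\varepsilon\,|x|^{\theta-\alpha(x)}<0$, and in fact $\leq -c$ for some $c>0$ off a compact set. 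Combined with boundedness of $\mathcal A^{\alpha}V$ on compacts, this yields the required Foster--Lyapunov drift condition, hence positive Harris recurrence and ergodicity.

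The main obstacle I expect is the rigorous interchange of the two limsups and the uniformity in $x$ of the remainder estimates: the $o(1)$ term in the asymptotic expansion of $\mathcal A^{\alpha}V(x)$ must be controlled uniformly over $\theta$ in a left-neighbourhood of $\alpha$ and over all large $|x|$ simultaneously (recall $\alpha(x)$ itself varies with $x$ between $\alpha$ and $\sup\alpha(x)<2$), and one must verify continuity/finiteness of $E(\alpha,\theta)$ — in particular that the series and hypergeometric values are well-defined and jointly continuous for $\alpha\in(1,2)$, $\theta\in(1,\alpha)$ — so that the double-limsup hypothesis is genuinely exploitable. A secondary technical point is the smoothing of $|x|^{\theta}$ near $0$ and the verification that the modified $V$ lies in the extended generator domain of $\process{X^{\alpha}}$ with the stated action, plus confirming the petiteness/irreducibility ingredients needed to upgrade the drift inequality to full ergodicity in the sense of \cite{meyn-tweedie-III}.
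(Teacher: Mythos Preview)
Your overall strategy is exactly the paper's: apply the Foster--Lyapunov criterion (Theorem~\ref{tm2.3}(iii)) to $V(x)=(\varphi(x))^{\theta}$ with $\theta\in(1,\alpha)$, using the already-established irreducibility and petiteness of compacts, and identify the jump integral asymptotically with $E(\alpha(x),\theta)$. However, there is a concrete error in your displayed asymptotic and in the deduction you draw from it.

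The term $\frac{\alpha(x)}{\theta c(x)}|x|^{\alpha(x)-\theta}$ in condition (\ref{eq:1.4}) does \emph{not} come from $\mathcal{A}^{\alpha}V(x)$. With the correct normalization $|x|^{\alpha(x)-\theta}$ (not $|x|^{1-\theta}$), the computation gives
\[
\frac{\alpha(x)}{\theta c(x)}\,|x|^{\alpha(x)-\theta}\,\mathcal{A}^{\alpha}V(x)
=\mathrm{sgn}(x)\,\frac{\alpha(x)}{c(x)}\,|x|^{\alpha(x)-1}\beta(x)+E(\alpha(x),\theta)+o(1);
\]
the missing middle term is exactly $\frac{\alpha(x)}{\theta c(x)}|x|^{\alpha(x)-\theta}\cdot 1$, arising from the $+1$ that Theorem~\ref{tm2.3}(iii) demands (one must show $\mathcal{A}^{\alpha}V\leq -1$ off a compact, not merely $\leq 0$). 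Thus (\ref{eq:1.4}) is precisely the statement that $\frac{\alpha(x)}{\theta c(x)}|x|^{\alpha(x)-\theta}\bigl(\mathcal{A}^{\alpha}V(x)+1\bigr)$ is eventually negative, and since the prefactor is positive this yields $\mathcal{A}^{\alpha}V(x)<-1$ directly for large $|x|$. Your chain ``$\mathcal{A}^{\alpha}V(x)\leq -\varepsilon\,|x|^{\theta-\alpha(x)}$, and in fact $\leq -c$'' is a non sequitur as written: since $\theta-\alpha(x)\leq\theta-\alpha<0$, the bound $-\varepsilon\,|x|^{\theta-\alpha(x)}\to 0$ and no uniform negative constant follows from it. Once the $+1$ is placed on the left where it belongs, your worry about uniformity in $\theta$ also dissolves: one first fixes a single $\theta$ close enough to $\alpha$ for which the inner $\limsup$ in (\ref{eq:1.4}) is negative, and only then sends $|x|\to\infty$ with that $\theta$ held fixed---no simultaneous limit is needed.
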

Let us give several remarks about Theorems \ref{tm1.1} and
\ref{tm1.2}. Firstly, note that condition (\ref{eq:1.4}) implies
condition (\ref{eq:1.2}). To see this, note that
\begin{align*}\pi\rm{ctg}\left(\frac{\pi\alpha(\it{x})}{\rm{2}}\right)
<\frac{\alpha(\it{x})}{\theta
c(\it{x})}|\it{x}|^{\alpha(x)-\theta}+E(\alpha(x),\theta)\end{align*}
for all $0<\theta<\alpha$ and all $|x|$ large enough. Thus, as it is
commented in Section 2, ergodicity implies recurrence. Secondly, let
$0<\theta<\liminf_{|x|\longrightarrow\infty}\alpha(x)$ be arbitrary,
then
\begin{align}\label{eq:1.5}\limsup_{|x|\longrightarrow\infty}\left(sgn(\it{x})\frac{\alpha(\it{x})}{c(\it{x})}|x|^{\alpha(x)-\rm{1}}\rm
\beta(\it{x})+E(\alpha(x),\theta)\right)<\rm{0}\end{align} implies
recurrence of the corresponding stable-like process
 (see the proof of Theorem \ref{tm1.2}).
Further, it can be proved that the function $\theta\longmapsto
E(\alpha,\theta)$ is strictly increasing, thus we choose $\theta$
close to zero. Next, from (\ref{eq:3.2}), (\ref{eq:3.3}) and
(\ref{eq:3.4}), it is easy to see that
$$\lim_{\theta\longrightarrow0}E(\alpha,\theta)=\pi\rm{ctg}\left(\frac{\pi\alpha}{\rm{2}}\right).$$ Hence, (\ref{eq:1.5}) becomes (\ref{eq:1.2}).
Thirdly, note that if
$\limsup_{|x|\longrightarrow\infty}\alpha(x)<1$, than the
corresponding stable like process cannot be recurrent. Fourthly, the
assumption $\liminf_{|x|\longrightarrow\infty}\alpha(x)\geq1$ in
Theorem \ref{tm1.1} (i) is not restrictive, that is, we can state
Theorem \ref{tm1.1} (i) without any  assumptions about the function
$\alpha(x)$ (see the proof of Theorem \ref{tm1.1} (i)). But, if we
allow that $\liminf_{|x|\longrightarrow\infty}\alpha(x)<1$, then,
clearly, (\ref{eq:1.2}) does not hold. Finally, as a simple
consequence of Theorem \ref{tm1.1}  we get a new proof for
 the well-known recurrence and transience property of L\'{e}vy
 processes.
\begin{corollary}\label{c1.3}
A one-dimensional  stable L\'{e}vy process given by the symbol
(characteristic exponent) $p(\xi)=\gamma|\xi|^{\alpha},$ where
$\alpha\neq1$ and $\gamma\in(0,\infty)$, is recurrent if and only if
$\alpha>1$.
\end{corollary}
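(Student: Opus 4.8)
The plan is to obtain Corollary~\ref{c1.3} as a direct specialization of Theorem~\ref{tm1.1} to constant coefficients. First I would observe that the symmetric $\alpha$-stable L\'{e}vy process with characteristic exponent $p(\xi)=\gamma|\xi|^{\alpha}$, $\alpha\in(0,2)$, $\gamma\in(0,\infty)$, is precisely the stable-like process $\process{X^{\alpha}}$ determined by the constant functions $\alpha(x)\equiv\alpha$, $\beta(x)\equiv0$ and $\gamma(x)\equiv\gamma$; these functions trivially satisfy all the standing hypotheses (bounded, $C^{1}$ with bounded derivatives, with $\inf\alpha>0$, $\sup\alpha<2$, $\inf\gamma>0$), and $c(x)\equiv c$ is the constant produced by the formula in the Introduction. (For the boundary value $\alpha=2$ the symbol $\gamma|\xi|^{2}$ is that of a one-dimensional Brownian motion, which is classically recurrent, so the statement holds there too; below I treat only $\alpha\in(0,2)$, the range covered by Theorem~\ref{tm1.1}.)

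Next I would evaluate the expression under the $\limsup$ in (\ref{eq:1.2}) and under the $\liminf$ in (\ref{eq:1.3}). Since $\beta(x)\equiv0$, the drift term $\mathrm{sgn}(x)\frac{\alpha(x)}{c(x)}|x|^{\alpha(x)-1}\beta(x)$ vanishes identically, so both quantities reduce to the single constant $\pi\,\mathrm{ctg}(\pi\alpha/2)$. For $\alpha\in(0,2)$ one has $\pi\alpha/2\in(0,\pi)$, hence $\sin(\pi\alpha/2)>0$, and therefore
\[
\pi\,\mathrm{ctg}\!\left(\frac{\pi\alpha}{2}\right)=\frac{\pi\cos(\pi\alpha/2)}{\sin(\pi\alpha/2)}
\]
has the same sign as $\cos(\pi\alpha/2)$: it is strictly negative precisely when $\pi\alpha/2>\pi/2$, i.e.\ when $\alpha>1$, and strictly positive precisely when $\alpha<1$.

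Putting this together: if $\alpha>1$, then $\liminf_{|x|\to\infty}\alpha(x)=\alpha\ge1$ and (\ref{eq:1.2}) holds, so Theorem~\ref{tm1.1}(i) gives recurrence of $\process{X^{\alpha}}$; if $\alpha<1$, then (\ref{eq:1.3}) holds, so Theorem~\ref{tm1.1}(ii) gives transience. Since $\alpha\ne1$ by assumption, exactly one of these alternatives occurs, and hence $\process{X^{\alpha}}$ is recurrent if and only if $\alpha>1$, as claimed. I do not expect any serious obstacle here: the entire content is the elementary sign analysis of $\mathrm{ctg}(\pi\alpha/2)$ on $(0,2)\setminus\{1\}$ together with the identification of the constant-coefficient stable-like process with the symmetric stable L\'{e}vy process. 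The only mildly delicate point — and the reason for the restriction $\alpha\ne1$ — is that for $\alpha=1$ one has $\mathrm{ctg}(\pi/2)=0$, so neither of the strict inequalities (\ref{eq:1.2}), (\ref{eq:1.3}) can hold and Theorem~\ref{tm1.1} gives no information.
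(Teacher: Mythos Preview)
Your proof is correct and follows essentially the same route as the paper's: specialize Theorem~\ref{tm1.1} to constant coefficients $\alpha(x)\equiv\alpha$, $\beta(x)\equiv0$, $\gamma(x)\equiv\gamma$ and read off the dichotomy from the sign of $\pi\,\mathrm{ctg}(\pi\alpha/2)$. The only difference is the boundary case $\alpha=2$: you invoke the classical recurrence of Brownian motion, whereas the paper stays within its Foster--Lyapunov framework by applying the same test function $V(x)=\ln(1+\varphi(x))$ and computing $\mathcal{A}^{2}V(x)=\gamma V''(x)=-\gamma/(1+|x|)^{2}<0$ for $|x|>1$.
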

Note that Theorem \ref{tm1.1} (i) does not imply the recurrence
property of one-dimensional symmetric $1$-stable L\'{e}vy
 process since, in this case, the left-hand side in (\ref{eq:1.2})
 equals to zero.

 Now, we explain our strategy of  proving the  main results. The
proofs of Theorems~\ref{tm1.1} and~\ref{tm1.2} are based on the
\emph{Foster-Lyapunov  criteria}  (see   Theorem \ref{tm2.3}). These
criteria are based on finding an appropriate test function $V(x)$
(positive and unbounded in the recurrent case, positive and bounded
in the transient case and positive and finite in the ergodic case),
such that $\mathcal{A}^{\alpha}V(x)$ is well defined, and a compact
set $C\subseteq\R$, such that $\mathcal{A}^{\alpha}V(x)\leq0$ in the
recurrent case, $\mathcal{A}^{\alpha}V(x)\geq0$ in the transient
case and $\mathcal{A}^{\alpha}V(x)\leq-1$ in the ergodic case for
all $x\in C^{c}.$  The idea is to find test function $V(x)$ such
that the associated level sets $C_V(r):=\{y:V(y)\leq r\}$ are
compact sets and such that $C_V(r)\uparrow\R$, when
$r\nearrow\infty$, in the cases of recurrence and ergodicity and
$C_V(r)\uparrow\R$, when $r\nearrow1$, in the case of transience. In
the recurrent case, for the test function we take $V(x)=\ln(1+|x|)$.
In the transient case we take $V(x)=1-(1+|x|)^{-\theta}$, where
$0<\theta<1$ is arbitrary, and in the ergodic case we take
$V(x)=|x|^{\theta}$, where $1<\theta<\inf_{x\in\R}\alpha(x)$ is
arbitrary. Now, by proving
 that
$$\limsup_{|x|\longrightarrow\infty}\frac{\alpha(x)|x|^{\alpha(x)}}{c(x)}\mathcal{A}^{\alpha}V(x)<0$$
in the recurrent case,
$$\liminf_{|x|\longrightarrow\infty}\frac{\alpha(x)|x|^{\alpha(x)+\theta}}{c(x)}\mathcal{A}^{\alpha}V(x)>0$$
in the transient case and
$$\limsup_{|x|\longrightarrow\infty}\frac{\alpha(x)|x|^{\alpha(x)-\theta}}{c(x)}\left(\mathcal{A}^{\alpha}V(x)+1\right)<0$$
in the ergodic case,
 the proofs of Theorems~\ref{tm1.1}
and~\ref{tm1.2} are accomplished.

Let us remark that a similar approach,  using similar test
functions, can be found in \cite{lamperti}, \cite{rus} and
\cite{sandric-rectrans} in the discrete-time case and in
\cite{stramer} and \cite{wang-ergodic} in the continuous-time case.

The paper is organized as follows. In Section 2 we recall some
preliminary and auxiliary results regarding long-time behavior of
general Markov processes and we discuss several structural
properties of stable-like processes  which will be crucial in
finding sufficient conditions for  recurrence, transience and
ergodic properties. Further, we also give and discuss some
consequences of the main results. Finally, in Section 3, using the
Foster-Lyapunov  criteria, we give  proofs of Theorems \ref{tm1.1}
and \ref{tm1.2}.

Throughout the paper we use the following notation.  We write
$\ZZ_+$ and $\ZZ_-$ for nonnegative  and nonpositive integers,
respectively. By $\lambda(\cdot)$
                                                         we denote
                                                         the
                                                         Lebesgue
                                                         measure on the Borel $\sigma$-algebra $\mathcal{B}(\R^{d})$. Furthermore, $\{\Omega,\mathcal{F},\{\mathbb{P}^{x}\}_{x\in\R^{d}},\process{\mathcal{F}},\process{X}\}$, $\process{X}$ in the sequel, will denote an arbitrary Markov process on $\R^{d}$ with transition kernel $p^{t}(x,\cdot):=\mathbb{P}^{x}(X_t\in \cdot)$ and
                                                         $\process{X^{\alpha}}$  will denote
                                                         the
stable-like process given by the infinitesimal generator
(\ref{eq:1.1}).

\section{Preliminary and auxiliary results}

\quad \ \ In this section we recall some preliminary and auxiliary
results regarding long-time behavior of general Markov processes and
we discuss several structural properties of stable-like processes.
\begin{definition} Let $\process{X}$ be a c\`adl\`ag strong Markov
process on $\R^{d}$. The process $\process{X}$ is called
\begin{enumerate}
                      \item [(i)] \emph{Lebesgue irreducible} if
                      $\lambda(B)>0$ implies
                      $\int_{0}^{\infty}p^{t}(x,B)dt>0$ for all
                      $x\in\R^{d}$.
                      \item [(ii)] \emph{recurrent} if it is Lebesgue
                      irreducible and if $\lambda(B)>0$ implies $\int_{0}^{\infty}p^{t}(x,B)dt=\infty$ for all
                      $x\in\R^{d}$.
\item [(iii)] \emph{Harris recurrent} if it is Lebesgue
                      irreducible and if $\lambda(B)>0$ implies $\mathbb{P}^{x}(\tau_B<\infty)=1$ for all
                      $x\in\R^{d}$, where $\tau_B:=\inf\{t\geq0:X_t\in
                      B\}.$
                      \item [(iv)] \emph{transient} if it is Lebesgue
                      irreducible and if there exists a countable
                      covering of $\R^{d}$ with  sets
$\{B_j\}_{j\in\N}\subseteq\mathcal{B}(\R^{d})$, such that for each
$j\in\N$ there is a finite constant $M_j\geq0$ such that
$\int_0^{\infty}p^{t}(x,B_j)dt\leq M_j$ holds for all $x\in\R^{d}$.
                    \end{enumerate}
                                                         \end{definition}
 Note that the Lebesgue irreducibility of  stable L\'{e}vy processes  is trivially satisfied, and  the Lebesgue irreducibility of
 general stable-like processes has been shown in \cite[Theorem
 3.3]{rene-wang-feller}. Hence, according to \cite[Theorem
2.3]{tweedie-mproc}, every stable-like process is either recurrent
or transient. Further, every Harris recurrent process is recurrent
but in general, these two properties are not equivalent. They differ
on the set of the irreducibility measure zero (see \cite[Theorem
2.5]{tweedie-mproc}). In the case of the stable-like process
$\process{X^{\alpha}}$, by \cite[Theorems 4.2 and
4.4]{bjoern-overshoot}, these two properties are equivalent.

A $\sigma$-finite measure $\pi(\cdot)$ on $\mathcal{B}(\R^{d})$ is
called an \emph{invariant measure} for the Markov process
$\process{X}$ if
$$\pi(B)=\int_{\R^{d}}p^{t}(x,B)\pi(dx)$$ holds for all $t>0$ and all $B\in\mathcal{B}(\R^{d}).$ It is shown in \cite[Theorem 2.6]{tweedie-mproc} that if $\process{X}$ is
a recurrent process then there exists a unique (up to constant
multiples) invariant measure $\pi(\cdot)$. If the invariant measure
is finite, then it may be normalized to a probability measure.  If
$\process{X}$ is (Harris) recurrent with finite invariant measure
$\pi(\cdot)$, then $\process{X}$ is called \emph{positive (Harris)
recurrent}, otherwise it is called \emph{null (Harris) recurrent}.
The Markov process $\process{X}$ is called \emph{ergodic} if an
invariant probability measure $\pi(\cdot)$ exists and if
$$\lim_{t\longrightarrow\infty}||p^{t}(x,\cdot)-\pi(\cdot)||=0$$
hods for all $x\in\R^{d},$ where $||\cdot||$ denotes the total
variation norm on the space of signed measures. One would expect
that every positive (Harris) recurrent process  is ergodic, but in
general this is not true (see \cite{meyn-tweedie-II}). In the case
of the stable-like process $\process{X^{\alpha}}$,  these two
properties coincide. Indeed, according to \cite[Theorem
6.1]{meyn-tweedie-II} and \cite[Theorem 3.3]{rene-wang-feller} it
suffices to show that if
 $\process{X^{\alpha}}$ possess an invariant probability measure $\pi(\cdot)$, then
it is recurrent. Assume that $\process{X^{\alpha}}$ is transient.
Then there exist a countable
                      covering of $\R$ with  sets
$\{B_j\}_{j\in\N}\subseteq\mathcal{B}(\R)$, such that for each
$j\in\N$ there is a finite constant $M_j\geq0$ such that
$\int_0^{\infty}p^{t}(x,B_j)dt\leq M_j$ holds for all $x\in\R$. Let
$t>0$ be arbitrary. Then for each $j\in\N$ we have
$$t\pi(B_j)=\int_0^{t}\int_{\R}p^{s}(x,B_j)\pi(dx)ds\leq M_j.$$ By
letting $t\longrightarrow\infty$ we get that $\pi(B_j)=0$ for all
$j\in\N$, which is impossible. Let us remark that a stable L\'{e}vy
process is never ergodic since the Lebesgue measure satisfies the
invariance property.

Due to the above discussion and from Theorems \ref{tm1.1} (i),
\ref{tm1.2} and \cite[Theorem 3.2]{meyn-tweedie-II}, we get the
following two additional long-time properties of stable-like
processes.
\begin{corollary}\begin{enumerate}
                   \item [(i)] Under   assumptions of Theorem  \ref{tm1.1} (i),  for each initial position
$x\in\R$ and each   covering $\{O_n\}_{n\in\N}$ of $\R$ by open
bounded sets we have
$$\mathbb{P}^{x}\left(\bigcap_{n=1}^{\infty}\bigcup_{m=0}^{\infty}\left\{\int_m^{\infty}1_{\{X^{\alpha}_t\in
O_n\}}dt=0\right\}\right)=0,\quad x\in\R.$$ In other words, for each
initial position $x\in\R$ the event $\{X_t^{\alpha}\in C^{c}\
\textrm{for any compact set}\ C\subseteq\R\ \textrm{and all}\
t\in\R_+\ \textrm{sufficiently large}\}$ has probability $0$.
                   \item [(ii)]Under   assumptions of Theorem \ref{tm1.2},  for each initial position $x\in\R$ and each $\varepsilon>0$,
there exists a compact set $C\subseteq\R$ such that
$$\liminf_{
t\longrightarrow\infty}\mathbb{E}^{x}\left[\frac{1}{t}\int_0^{t}1_{\{X^{\alpha}_s\in
C\}}ds\right]\geq 1-\varepsilon.$$
                 \end{enumerate}
\end{corollary}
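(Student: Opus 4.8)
The plan is to obtain both assertions as essentially immediate consequences of the main theorems together with the structural properties of stable-like processes recorded in this section; no new analytic estimate is needed, so the work lies entirely in matching the abstract hypotheses of \cite{meyn-tweedie-II}.

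For part (i): by Theorem \ref{tm1.1}(i) the process $\process{X^{\alpha}}$ is recurrent, and, as noted above, for stable-like processes recurrence is equivalent to Harris recurrence (via \cite[Theorems 4.2 and 4.4]{bjoern-overshoot}), while Lebesgue irreducibility is guaranteed by \cite[Theorem 3.3]{rene-wang-feller}. A Harris recurrent process is non-evanescent, and the displayed identity is precisely the non-evanescence conclusion of \cite[Theorem 3.2]{meyn-tweedie-II} applied to $\process{X^{\alpha}}$: for a fixed $n$, because $O_n$ is open and the paths are c\`adl\`ag, $\int_m^{\infty}1_{\{X^{\alpha}_t\in O_n\}}dt=0$ forces $X^{\alpha}_t\notin O_n$ for all $t\geq m$, so $\bigcup_m$ is the event that the path eventually abandons $O_n$ forever and $\bigcap_n$ is the escape-to-infinity event. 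The reformulation in terms of compact sets then follows by specializing to the exhaustion $O_n=(-n,n)$ and observing that every compact $C\subseteq\R$ lies in some $O_n$.

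For part (ii): by Theorem \ref{tm1.2} the process is ergodic, hence — as discussed above — positive Harris recurrent with an invariant probability measure $\pi(\cdot)$ and $\|p^{t}(x,\cdot)-\pi(\cdot)\|\to 0$ for every $x\in\R$. Fix $\varepsilon>0$. Since $\pi(\cdot)$ is a probability measure and $(-n,n)\uparrow\R$, choose a compact set $C=[-n_0,n_0]$ with $\pi(C)\geq 1-\varepsilon$. Then
$$\mathbb{E}^{x}\left[\frac{1}{t}\int_0^{t}1_{\{X^{\alpha}_s\in C\}}ds\right]=\frac{1}{t}\int_0^{t}p^{s}(x,C)\,ds,$$
and since total-variation convergence gives $p^{s}(x,C)\to\pi(C)$ as $s\to\infty$, the Ces\`aro average on the right converges to $\pi(C)\geq 1-\varepsilon$; in particular its $\liminf$ is at least $1-\varepsilon$. (One may alternatively invoke the occupation-time ratio-limit statement of \cite[Theorem 3.2]{meyn-tweedie-II} directly.)

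The only point requiring care — and thus the main obstacle, such as it is — is verifying that the hypotheses of the invoked abstract results genuinely hold for $\process{X^{\alpha}}$: Lebesgue irreducibility, the equivalence of recurrence with Harris recurrence, and, for (ii), that ergodicity really comes packaged with a bona fide invariant \emph{probability} measure (the equivalence of positive Harris recurrence with ergodicity for this class, established above). Once these identifications are made, both displays are formal.
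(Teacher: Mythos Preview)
Your proposal is correct and follows essentially the same route as the paper, which simply says the corollary follows from Theorems~\ref{tm1.1}(i), \ref{tm1.2}, the structural facts about $\process{X^{\alpha}}$ established in Section~2, and \cite[Theorem 3.2]{meyn-tweedie-II}. Your explicit Ces\`aro-average argument for part (ii) is a clean unpacking of the ``bounded in probability on average'' conclusion that the cited theorem delivers, and your remark that one may alternatively invoke that theorem directly is exactly what the paper does.
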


As already mentioned, the proofs of Theorems~\ref{tm1.1} and
\ref{tm1.2} are based on the Foster-Lyapunov  criteria. Let us
recall several notions regarding Markov processes we are going to
need in the sequel. The \emph{extended domain} of a c\`adl\`ag
Markov process $\process{X}$ on $\R^{d}$   is defined by
\begin{align*}\mathcal{D}(\tilde{\mathcal{A}}):=\Bigg\{&f(x)\
\textrm{is Borel measurable}:\ \textrm{there is a Borel measurable
function}\ g(x)\ \textrm{such that}\\
&f(X_t)-f(X_0)-\int_{0}^{t}g(X_s)ds\ \textrm{is a local martingale
under}\ \{\mathbb{P}^{x}\}_{x\in\R^{d}}\Bigg\}.\end{align*} Let us
remark that in general, the function $g(x)$ does not have to be
unique (see \cite[Page 24]{ethier-kurtz-book})).  For
$f\in\mathcal{D}(\tilde{\mathcal{A}})$ we define
$$\tilde{\mathcal{A}}f=\left\{g(x)\ \textrm{is Borel measurable}:
f(X_t)-f(X_0)-\int_{0}^{t}g(X_s)ds\ \textrm{is a local martingale
under}\ \{\mathbb{P}^{x}\}_{x\in\R^{d}}\right\}.$$ We  call
$\tilde{\mathcal{A}}$ the \emph{extended generator} of
$\process{X}$. A function $g\in\tilde{\mathcal{A}}f$ is usually
abbreviated by $\tilde{\mathcal{A}}f(x):=g(x)$.
 Clearly, if
$\mathcal{A}$ is the infinitesimal generator of the Markov process
$\process{X}$ with the domain $\mathcal{D}(\mathcal{A})$, then
$\mathcal{D}(\mathcal{A})\subseteq\mathcal{D}(\tilde{\mathcal{A}})$
and for $f\in\mathcal{D}(\mathcal{A})$ the function
$\mathcal{A}f(x)$ is contained in $\tilde{\mathcal{A}}f$ (see
\cite[Proposition IV.1.7]{ethier-kurtz-book}). In the case of the
stable-like process $\process{X^{\alpha}}$, obviously we have
\begin{align}\label{eq:2.1}\left\{f\in
C^{2}(\R):\left|\int_{\{|y|\geq1\}}(f(y+x)-f(x))\frac{c(x)}{|y|^{\alpha(x)+1}}dy\right|<\infty\
\textrm{for all}\
x\in\R\right\}\subseteq\mathcal{D}(\tilde{\mathcal{A}}),\end{align}
and for the function $\tilde{\mathcal{A}}f(x)$ we can take exactly
the function $\mathcal{A}^{\alpha}f(x),$ where
$\mathcal{A}^{\alpha}$ is the infinitesimal generator of the
stable-like process $\process{X^{\alpha}}$ given by (\ref{eq:1.1}).

Next, let $\process{X}$ be a c\`adl\`ag Markov process  on $\R^{d}$
and let
$$T_c:=\inf\{t\geq0: X_t\notin\R^{d}\}\quad\textrm{and}\quad T_e:=\lim_{n\longrightarrow\infty}\inf\{t\geq0:
|X_t|>n\}.$$  The process $\process{X}$ is called
\emph{conservative}  if $\mathbb{P}^{x}(T_c=\infty)=1$ for all
$x\in\R^{d}$ and \emph{non-explosive} if
$\mathbb{P}^{x}(T_e=\infty)=1$ for all $x\in\R^{d}$. By
\cite[Theorem 5.2]{rene-conserv}, the stable-like process
$\process{X^{\alpha}}$ is always conservative and then, sice it has
c\`adl\`ag paths, it also non-explosive.

A function $V:\R^{d}\longrightarrow\R_+$ is called a \emph{norm-like
function} (for a c\`adl\`ag Markov process $\process{X}$) if
$V\in\mathcal{D}(\tilde{\mathcal{A}})$ and the level sets
$\{x:V(x)\leq r\}$ are precompact sets for each level $r\geq0$.

Finally, a set  $C\in \mathcal{B}(\R^{d})$ is called
$\nu_a$\emph{-petite set} (for a c\`adl\`ag Markov process
$\process{X}$) if there exist a probability measure $a(\cdot)$
  on $\mathcal{B}(\R_+)$ and a nontrivial measure $\nu_a(\cdot)$ on $\mathcal{B}(\R^{d})$ such that
$$\int_{0}^{\infty}p^{t}(x,B)a(dt)\geq \nu_a(B)$$ holds for all $x\in C$ and all $B\in
\mathcal{B}(\R^{d})$.

Now, we state the Foster-Lyapunov  criteria (see
 \cite[Theorems 3.2 and 4.2]{meyn-tweedie-III} and \cite[Theorem
 3.3]{stramer-tweedie-stability}).
\begin{theorem}\label{tm2.3}  Let $\process{X}$ be a non-explosive
Lebesgue irreducible  c\`adl\`ag Markov process on $\R^{d}$.
\begin{enumerate}
  \item [(i)] If  every compact set is a petite set and if there exist a
  compact
set $C\subseteq\R^{d}$ with $\lambda(C)>0$, a constant $d>0$ and a
norm-like function $V(x)$, such that  $$\tilde{\mathcal{A}}V(x)\leq
d1_C(x),\quad x\in\R^{d},$$ then the  process $\process{X}$ is
Harris recurrent.
\item [(ii)]If there exist a bounded Borel
measurable function
  $V:\R^{d}\longrightarrow\R_+$ and  closed sets
  $C,D\subseteq\R^{d}$, $D\subseteq C^{c}$,
  such that\begin{enumerate}
\item [(a)] $\lambda(C)>0$, $\lambda(D)>0$ and $\sup_{x\in
C}V(x)<\inf_{x\in D}V(x)$
        \item [(b)] $\tilde{\mathcal{A}}V(x)\geq 0\quad\textrm{for all}\ x\in
        C^{c}$,
            \end{enumerate}
            then the  process $\process{X}$ is transient.
  \item [(iii)]If every compact set is a petite set and if  there exist  $d>0$, a compact set
$C\subseteq\R^{d}$ with $\lambda(C)>0$, a Borel measurable function
$f(x)\geq1$ and a nonnegative function
$V\in\mathcal{D}(\tilde{\mathcal{A}})$, such that
\begin{enumerate}
  \item [(a)] $V(x)$ is bounded on $C$
  \item [(b)] $\tilde{\mathcal{A}}V(x)\leq-f(x)+
d1_C(x)\quad\textrm{for all}\  x\in\R^{d},$
\end{enumerate}
then the  process $\process{X}$ is positive Harris recurrent and
$$\pi(f):=\int_{\R^{d}}f(x)\pi(dx)<\infty,$$ where $\pi(\cdot)$ is the
invariant measure for $\process{X}$.
\end{enumerate}
\end{theorem}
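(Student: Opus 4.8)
The plan is to extract from the defining property of the extended generator a super- or sub-martingale, apply optional stopping to convert the pointwise drift inequality into an estimate on the hitting time $\tau_C:=\inf\{t\ge0:X_t\in C\}$ of the distinguished set, and then quote the general structure theory for Lebesgue irreducible Markov processes to turn these hitting-time estimates into the stated recurrence/transience classification. For part (i): since $V\in\mathcal{D}(\tilde{\mathcal{A}})$, the process $M_t:=V(X_t)-V(X_0)-\int_0^t\tilde{\mathcal{A}}V(X_s)\,ds$ is a local martingale under every $\mathbb{P}^x$, and for $x\in C^c$ the paths stay in $C^c$ up to $\tau_C$, so $\int_0^{t\wedge\tau_C}1_C(X_s)\,ds=0$; the hypothesis $\tilde{\mathcal{A}}V\le d1_C$ then makes $t\mapsto V(X_{t\wedge\tau_C})$ a nonnegative local supermartingale, hence a genuine supermartingale, and localizing along a reducing sequence together with Fatou gives $\mathbb{E}^x[V(X_{t\wedge\tau_C})]\le V(x)$ for all $t$. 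Because $V$ is norm-like its sublevel sets are precompact, and since $\process{X}$ is non-explosive, letting $t\to\infty$ forces $\mathbb{P}^x(\tau_C<\infty)=1$: the process reaches $C$ from every starting point. As every compact set is petite, this ``everywhere hitting'' property is exactly the condition that yields Harris recurrence by the drift/petite-set criteria of \cite{meyn-tweedie-III,tweedie-mproc}.

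For part (ii) I would run the same computation with the inequality reversed: for $x\in C^c$, the bound $\tilde{\mathcal{A}}V\ge0$ on $C^c$ makes $t\mapsto V(X_{t\wedge\tau_C})$ a bounded submartingale, so $\mathbb{E}^x[V(X_{t\wedge\tau_C})]\ge V(x)$ and, letting $t\to\infty$,
\[
\mathbb{E}^x\!\big[V(X_{\tau_C})1_{\{\tau_C<\infty\}}\big]+\limsup_{t\to\infty}\mathbb{E}^x\!\big[V(X_t)1_{\{\tau_C=\infty\}}\big]\ \ge\ V(x).
\]
Taking $x\in D$ and using $\sup_C V<\inf_D V$ together with the boundedness of $V$, the first term is at most $\sup_C V<V(x)$, so the second term is positive and hence $\mathbb{P}^x(\tau_C=\infty)>0$ for every $x\in D$. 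Since $\lambda(C)>0$, $\lambda(D)>0$ and the process is Lebesgue irreducible, this positive escape probability from a set of positive measure is incompatible with recurrence, so by the recurrence/transience dichotomy (\cite[Theorem 2.3]{tweedie-mproc}) the process is transient.

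Part (iii) is the quantitative refinement of (i): $\tilde{\mathcal{A}}V\le-f+d1_C$ makes $V(X_t)+\int_0^t f(X_s)\,ds-d\int_0^t1_C(X_s)\,ds$ a supermartingale, and optional stopping at the return time to $C$ gives $\mathbb{E}^x\big[\int_0^{\tau_C}f(X_s)\,ds\big]\le V(x)<\infty$ for $x$ in a neighbourhood of $C$. Part (i) already provides Harris recurrence; representing the invariant measure of a recurrent process as a normalized occupation measure over an excursion away from the petite set $C$ (a Kac-type formula), the finiteness of this expectation shows the invariant measure is finite, so the process is positive Harris recurrent, and simultaneously that $\pi(f)<\infty$.

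The main obstacle is not any single calculation but the bookkeeping around localization: the extended-generator identity only supplies a \emph{local} martingale, so each optional-stopping step needs a reducing sequence $T_n\uparrow\infty$, together with monotone or Fatou control and the non-explosiveness hypothesis (ensuring $T_n\wedge\tau_C\to\tau_C$) in order to pass to the limit. The second delicate point is essentially citational: converting ``a.s.\ return to the petite set $C$'', ``avoidance of $C$ with positive probability from $D$'', and ``finite expected $f$-occupation before returning to $C$'' into the measure-theoretic notions of Harris recurrence, transience and positive Harris recurrence relies on the structure theory of $\psi$-irreducible Markov processes from \cite{meyn-tweedie-II,meyn-tweedie-III,tweedie-mproc}, which I would quote rather than reprove.
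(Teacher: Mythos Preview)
The paper does not prove Theorem~\ref{tm2.3} at all: it is stated as a known result with the citation ``see \cite[Theorems 3.2 and 4.2]{meyn-tweedie-III} and \cite[Theorem 3.3]{stramer-tweedie-stability}'' and then used as a black box in the proofs of Theorems~\ref{tm1.1} and~\ref{tm1.2}. Your proposal, by contrast, sketches the actual content of those cited proofs. So the comparison is simply that the paper outsources the argument entirely, while you reconstruct its outline; what you wrote is in fact a fair summary of how the Meyn--Tweedie/Stramer--Tweedie proofs proceed (local martingale from the extended generator, optional stopping, then the $\psi$-irreducible structure theory).

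One point in your sketch of (i) is not quite right as a self-contained step. From the nonnegative supermartingale $V(X_{t\wedge\tau_C})$ you only get a.s.\ convergence to a finite limit; this does \emph{not} by itself force $\mathbb{P}^x(\tau_C<\infty)=1$ for the specific compact set $C$, even using that sublevel sets of $V$ are precompact and the process is non-explosive. In the references the passage from the drift inequality to Harris recurrence goes through a sampled chain (or resolvent) and the discrete-time petite-set machinery rather than through the intermediate statement ``$\tau_C<\infty$ a.s.''. You already flag that the final classification step is citational, so this is a matter of where you draw the line between what you argue and what you quote; just be aware that the clean two-step decomposition ``supermartingale $\Rightarrow$ hits $C$ a.s.\ $\Rightarrow$ Harris recurrent'' is not how the cited proofs are organised. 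Parts (ii) and (iii) of your sketch are accurate.
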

Let us remark that in the case of the stable-like process
$\process{X^{\alpha}}$, according to \cite[Theorems 5.1 and
7.1]{tweedie-mproc}, the first requirements of  Theorem \ref{tm2.3}
(i) and (iii) always hold, that is, every compact set is a petite
set.

We end this section with the following observation. Assume that
$\process{X}$ is an ergodic Markov process with invariant measure
$\pi(\cdot)$. Then, clearly,
$$\lim_{t\longrightarrow\infty}\mathbb{E}^{x}[f(X_t)]=\pi(f)$$
holds for all $x\in\R^{d}$ and all bounded Borel measurable
functions $f(x).$ In what follows,  we extend this convergence to a
wider class of functions. For any Borel measurable function
$f(x)\geq 1$ and any signed measure $\mu(\cdot)$ on
$\mathcal{B}(\R^{d})$ we write
$$||\mu||_f:=\sup_{|g|\leq f}|\mu(g)|,$$ where the supremum is taken over all Borel measurable functions $g:\R^{d}\longrightarrow\R$ satisfying $|g(x)|\leq f(x)$ for all $x\in\R^{d}$.  A Markov process $\process{X}$ is called $f$-\emph{ergodic} if it is positive Harris
recurrent with invariant probability measure $\pi(\cdot)$, if
$\pi(f) < \infty$, and
$$\lim_{t\longrightarrow\infty}||p^{t}(x,\cdot)-\mu(f)||_f=0,\quad x\in\R^{d}.$$
Note that $||\cdot||_1=||\cdot||$. Hence,   $f$-ergodicity implies
ergodicity. Now, by Theorems  \ref{tm1.2}, \ref{tm2.3} (iii) and
\cite[Theorem 5.3 (ii)]{meyn-tweedie-III}, we get the following
sufficient condition for $f$-ergodicity.
\begin{theorem}
\label{tm2.4}Let $1<\alpha:=\inf_{x\in\R}\alpha(x)$ and let
$\theta\in(1,\alpha)$ be arbitrary.
 If there exist Borel measurable function $f(x)\geq1$ and strictly
 increasing, nonnegative and convex function $\phi(x)$, such that
 $|x|^{\theta}=\phi(f(x))$ for all $|x|$ large enough and
\begin{align}\label{eq:2.2}\limsup_{|x|\longrightarrow\infty}\left(\rm
sgn(\it{x})\frac{\alpha(\it{x})}{c(\it{x})}|x|^{\alpha(x)-\rm{1}}\beta(\it{x})+\frac{\alpha(\it{x})}{\theta
c(\it{x})}|x|^{\alpha(x)-\theta}f(x)+E(\alpha(x),\theta)\right)<\rm{0},\end{align}
 then the corresponding stable-like process $\process{X^{\alpha}}$ is
$f$-ergodic (recall that the constant $E(\alpha,\theta)$ is defined
in  Theorem \ref{tm1.2}).
\end{theorem}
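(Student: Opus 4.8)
The plan is to deduce $f$-ergodicity from the Foster--Lyapunov criterion Theorem~\ref{tm2.3}~(iii) together with the $f$-norm ergodic theorem \cite[Theorem~5.3~(ii)]{meyn-tweedie-III}, using the same test function that underlies the proof of Theorem~\ref{tm1.2}. Concretely, I would take $V(x):=|x|^{\theta}$ for $|x|$ large, modified on a neighbourhood of the origin so that $V\in C^{2}(\R)$; since $\theta<\alpha\leq\alpha(x)$ for all $x$, the tail integral $\int_{\{|y|\geq1\}}(V(y+x)-V(x))\,c(x)|y|^{-\alpha(x)-1}\,dy$ is finite, so by (\ref{eq:2.1}) $V\in\mathcal{D}(\tilde{\mathcal{A}})$ and one may take $\tilde{\mathcal{A}}V=\mathcal{A}^{\alpha}V$. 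The key input --- which is precisely the computation performed in the proof of Theorem~\ref{tm1.2} for this $V$ and the fixed $\theta\in(1,\alpha)$ --- is that the expression
\begin{align*}
\frac{\alpha(x)\,|x|^{\alpha(x)-\theta}}{\theta\,c(x)}\,\mathcal{A}^{\alpha}V(x)-\mathrm{sgn}(x)\frac{\alpha(x)}{c(x)}|x|^{\alpha(x)-1}\beta(x)-E(\alpha(x),\theta)
\end{align*}
tends to $0$ as $|x|\to\infty$. Granting this, the rest is routine.

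Adding the term $\alpha(x)|x|^{\alpha(x)-\theta}f(x)/(\theta c(x))$ to this asymptotic gives
\begin{align*}
\frac{\alpha(x)\,|x|^{\alpha(x)-\theta}}{\theta\,c(x)}\bigl(\mathcal{A}^{\alpha}V(x)+f(x)\bigr)
&=\mathrm{sgn}(x)\frac{\alpha(x)}{c(x)}|x|^{\alpha(x)-1}\beta(x)\\
&\quad+\frac{\alpha(x)}{\theta c(x)}|x|^{\alpha(x)-\theta}f(x)+E(\alpha(x),\theta)+o(1),
\end{align*}
whose $\limsup$ as $|x|\to\infty$ is strictly negative by hypothesis (\ref{eq:2.2}). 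Since the prefactor $\alpha(x)|x|^{\alpha(x)-\theta}/(\theta c(x))$ is strictly positive, this forces $\mathcal{A}^{\alpha}V(x)\leq-f(x)$ for all $|x|\geq R$, for some $R>0$. I would then set $C:=\{x\in\R:|x|\leq R\}$, a compact set with $\lambda(C)>0$. On $C$ the function $V$ is bounded; a direct estimate (using $V\in C^{2}$ and its $\theta$-growth at infinity, together with $\theta<\alpha(x)$) shows $\mathcal{A}^{\alpha}V$ is bounded on $C$; and $f$ may be taken bounded on $C$ as well, since the hypothesis prescribes $f$ only off a compact set. Hence $d:=\sup_{x\in C}\bigl(\mathcal{A}^{\alpha}V(x)+f(x)\bigr)<\infty$ and $\mathcal{A}^{\alpha}V(x)\leq-f(x)+d\,1_{C}(x)$ for all $x\in\R$.

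Now the abstract results apply directly. The stable-like process $\process{X^{\alpha}}$ is non-explosive and Lebesgue irreducible, every compact set is petite for it (remark after Theorem~\ref{tm2.3}), $V\geq0$ lies in $\mathcal{D}(\tilde{\mathcal{A}})$ and is bounded on $C$, and $f\geq1$; so Theorem~\ref{tm2.3}~(iii) shows that $\process{X^{\alpha}}$ is positive Harris recurrent with invariant probability measure $\pi(\cdot)$ and $\pi(f)<\infty$. Finally, because $|x|^{\theta}=\phi(f(x))$ for all large $|x|$ with $\phi$ strictly increasing, nonnegative and convex, the test function $V$ equals $\phi\circ f$ off a compact set, which is exactly the structural hypothesis needed for \cite[Theorem~5.3~(ii)]{meyn-tweedie-III}; invoking it yields $\lim_{t\to\infty}\|p^{t}(x,\cdot)-\pi\|_{f}=0$ for every $x\in\R$, i.e.\ $\process{X^{\alpha}}$ is $f$-ergodic.

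The hard part is the asymptotic identity quoted in the first paragraph, which is the computational core of Theorem~\ref{tm1.2}; I would establish it via the scaling substitution $y=xu$ in the L\'{e}vy integral defining $\mathcal{A}^{\alpha}V(x)$. After the substitution one must (i) replace the truncation $1_{\{|y|\leq1\}}$ by $1_{\{|u|\leq1\}}$, the discrepancy contributing only a lower-order term precisely because $\alpha(x)>1$; (ii) control the error caused by the $C^{2}$-modification of $V$ near the origin; and (iii) identify the limiting constant with $E(\alpha(x),\theta)$ through the binomial-series and Gauss-hypergeometric representation of $E$ recorded in Section~3. The delicate point is that all these estimates must be uniform in $x$, which is where one uses $0<\inf_{x}\gamma(x)$, the boundedness of $\alpha(x),\beta(x),\gamma(x)$, and the two-sided bound $\theta<\inf_{x}\alpha(x)\leq\sup_{x}\alpha(x)<2$ to dominate the integrand simultaneously near $u=0$, near $u=-1$, and near $u=\infty$. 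Once that estimate is in hand, the genuinely new work reduces to the elementary algebra above and checking the hypotheses of \cite[Theorem~5.3~(ii)]{meyn-tweedie-III}.
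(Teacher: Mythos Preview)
Your proposal is correct and follows essentially the same route as the paper: take the test function $V(x)=|x|^{\theta}$ (smoothed near the origin) from the proof of Theorem~\ref{tm1.2}, reuse that proof's asymptotic computation to obtain $\tilde{\mathcal{A}}V(x)\leq-f(x)$ for all large $|x|$ from hypothesis~(\ref{eq:2.2}), and then conclude $f$-ergodicity via \cite[Theorem~5.3~(ii)]{meyn-tweedie-III} using the relation $V=\phi\circ f$ off a compact set. The paper's written proof is simply terser, invoking the Meyn--Tweedie theorem directly rather than passing explicitly through Theorem~\ref{tm2.3}~(iii) first, but the content is the same.
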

Now, let $\eta\in(0,\theta]$ be arbitrary. By taking
$f(x)=|x|^{\eta}$ and $\phi(x)=|x|^{\frac{\theta}{\eta}}$ for all
$|x|$ large enough, we get the following corollary.
\begin{corollary}
Let $1<\alpha:=\inf_{x\in\R}\alpha(x)$. If
\begin{align*}\limsup_{\theta\longrightarrow\alpha}\limsup_{|x|\longrightarrow\infty}\left(\rm
sgn(\it{x})\frac{\alpha(\it{x})}{c(\it{x})}|x|^{\alpha(x)-\rm{1}}\beta(\it{x})+\frac{\alpha(\it{x})}{\theta
c(\it{x})}|x|^{\alpha(x)-\theta+\eta}+E(\alpha(x),\theta)\right)<\rm{0}\end{align*}
for some $\eta\in(0,\alpha)$, then the corresponding stable-like
process $\process{X^{\alpha}}$ is $f$-ergodic  for every function
$f(x)\geq1$ such that $f(x)\leq|x|^{\eta}$ for all $|x|$ large
enough.
\end{corollary}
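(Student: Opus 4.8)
The plan is to derive this directly from Theorem~\ref{tm2.4}, combined with the elementary fact that the weighted norm $||\cdot||_{f}$ is monotone in the weight $f$. First I would fix $\eta\in(0,\alpha)$ as in the statement and exploit the negativity of the outer $\limsup$ to select a suitable exponent $\theta$. Since in the corollary (as in Theorem~\ref{tm1.2}) the parameter $\theta$ ranges over $(0,\alpha)$, the hypothesis $\limsup_{\theta\to\alpha}\limsup_{|x|\to\infty}(\cdots)<0$ yields some $\delta>0$ for which $\limsup_{|x|\to\infty}\big(\mathrm{sgn}(x)\frac{\alpha(x)}{c(x)}|x|^{\alpha(x)-1}\beta(x)+\frac{\alpha(x)}{\theta c(x)}|x|^{\alpha(x)-\theta+\eta}+E(\alpha(x),\theta)\big)<0$ for all $\theta\in(\alpha-\delta,\alpha)$. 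Because $\alpha>1$ and $\eta<\alpha$, the interval $\big(\max\{1,\eta,\alpha-\delta\},\alpha\big)$ is non-empty, so I may fix $\theta$ in it; then $1<\theta<\alpha$ (which makes $E(\alpha(x),\theta)$ well defined, as $\theta<\alpha\le\alpha(x)$ for every $x$), $\theta\ge\eta$, and the above $\limsup$ is strictly negative for this $\theta$. This step is where the restriction $\eta<\alpha$ is genuinely used.

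Next I would manufacture an admissible weight. Let $f\ge1$ be any Borel function with $f(x)\le|x|^{\eta}$ for all sufficiently large $|x|$, and set $\bar f(x):=f(x)\vee|x|^{\eta}$ and $\phi(u):=u^{\theta/\eta}$. Then $\bar f$ is Borel measurable, $\bar f\ge1$, and $\bar f(x)=|x|^{\eta}$ for all large $|x|$, so $|x|^{\theta}=\phi(\bar f(x))$ there; moreover $\phi$ is strictly increasing, nonnegative and convex on $[0,\infty)$ precisely because $\theta/\eta\ge1$. Since $\bar f$ agrees with $|x|^{\eta}$ for large $|x|$, condition (\ref{eq:2.2}) of Theorem~\ref{tm2.4} written with the weight $\bar f$ is exactly the strictly negative $\limsup$ obtained above. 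Hence Theorem~\ref{tm2.4} applies and shows that $\process{X^{\alpha}}$ is $\bar f$-ergodic: it is positive Harris recurrent with a unique invariant probability measure $\pi(\cdot)$, $\pi(\bar f)<\infty$, and $||p^{t}(x,\cdot)-\pi(\cdot)||_{\bar f}\to 0$ for every $x\in\R$.

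Finally, since $1\le f\le\bar f$ pointwise, every Borel function $g$ with $|g|\le f$ also satisfies $|g|\le\bar f$, so $||\mu||_{f}\le||\mu||_{\bar f}$ for each signed measure $\mu$; in particular $\pi(f)\le\pi(\bar f)<\infty$ and $||p^{t}(x,\cdot)-\pi(\cdot)||_{f}\le||p^{t}(x,\cdot)-\pi(\cdot)||_{\bar f}\to 0$ for all $x\in\R$, which is exactly the assertion that $\process{X^{\alpha}}$ is $f$-ergodic. The argument is essentially routine, and I do not expect a serious obstacle; the only two points needing a little care are the simultaneous constraints imposed on $\theta$ in the first step and the replacement of the non-admissible weight $|x|^{\eta}$ (which is not $\ge1$ near the origin, and need not coincide with the given $f$) by the genuine weight $\bar f\ge1$ that still satisfies all hypotheses of Theorem~\ref{tm2.4}.
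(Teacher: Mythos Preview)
Your proof is correct and follows essentially the same route as the paper, which derives the corollary from Theorem~\ref{tm2.4} by taking the weight $|x|^{\eta}$ and $\phi(u)=u^{\theta/\eta}$ for large $|x|$. You have simply filled in the details the paper leaves implicit: the selection of a concrete $\theta\in(\max\{1,\eta,\alpha-\delta\},\alpha)$ from the outer $\limsup$, the replacement of $|x|^{\eta}$ by $\bar f=f\vee|x|^{\eta}$ to ensure the weight is $\ge 1$, and the monotonicity $||\cdot||_{f}\le||\cdot||_{\bar f}$ to pass from $\bar f$-ergodicity to $f$-ergodicity.
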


\section{Proof of  the main results}
\quad \ \ In this section we give  proofs of Theorems \ref{tm1.1}
and \ref{tm1.2}. Before the proofs, we recall several special
functions we need. The Gamma function is defined by the formula
$$\Gamma(z):=\int_0^{\infty}t^{z-1}e^{-t}dt,\quad z\in\CC,\ \mathrm{Re}\it(z)>\rm
0.$$ It can be analytically continued on $\CC\setminus\ZZ_-$  and it
satisfies the following two well-known properties
\be\label{eq:3.1}\Gamma(z+1)=z\Gamma(z)\quad\textrm{and}\quad\Gamma(1-z)\Gamma(z)=\frac{\pi}{\sin(\pi
z)}.\ee

The Digamma function is a function defined by
$\Psi(z):=\frac{\Gamma'(z)}{\Gamma(z)},$ for $z\in
\CC\setminus\ZZ_-,$ and it satisfies the following properties:
\begin{enumerate}
\item [(i)]
  \be\label{eq:3.2}\Psi(1+z)=-\gamma+\sum_{n=1}^{\infty}\frac{z}{n(n+z)},\ee
  where $\gamma$ is the Euler's number
\item [(ii)]\be\label{eq:3.3}\Psi(1+z)=\Psi(z)+\frac{1}{z}\ee
  \item [(iii)]
\be\label{eq:3.4}\Psi(1-z)=\Psi(z)+\pi\rm{ctg}\,(\pi z).\ee
\end{enumerate}

The Gauss hypergeometric function is defined by the formula
\be\label{eq:3.5}_2F_1(a,b,c;z):=\sum_{n=0}^{\infty}\frac{(a)_n(b)_n}{(c)_n}\frac{z^{n}}{n!},\ee
for $a,b,c,z\in \CC$, $c\notin\ZZ_-$, where for $w\in\CC$ and
$n\in\ZZ_+$, $(w)_n$ is defined by
$$(w)_0=1\quad \textrm{and}\quad (w)_n=w(w+1)\cdots(w+n-1).$$
The series (\ref{eq:3.5}) absolutely converges on $|z|<1$,
absolutely converges on $|z|\leq1$ when $\mathrm{Re}\it(c-a-b)>\rm
0$, conditionally converges on $|z|\leq1$, except for $z=1$, when
$-1<\mathrm{Re} \it(c-b-a)\leq \rm0$ and diverges when
$\mathrm{Re}\it (c-b-a)\leq\rm-1$. In the case when $\mathrm{Re}
\it(c)>\mathrm{Re} \it (b)>\rm0$, it can be analytically continued
on $\CC\setminus(1,\infty)$ by the formula
\be\label{eq:3.6}_2F_1(a,b,c;z)=\frac{\Gamma(c)}{\Gamma(b)\Gamma(c-b)}\int_{0}^{1}t^{b-1}(1-t)^{c-b-1}(1-tz)^{-a}dt,\ee
and for $a,b\in\CC,$ $c\in\CC\setminus\ZZ_-$ and
$z\in\CC\setminus(0,\infty)$ it satisfies the following relation
\begin{align}\label{eq:3.7}
                              _2F_1(a,b,c;z)=&\frac{\Gamma(c)\Gamma(b-a)}{\Gamma(b)\Gamma(c-a)}(-z)^{-a}\,_2F_1\left(a,1-c+a,1-b+a;\frac{1}{z}\right)\nonumber\\&+\frac{\Gamma(c)\Gamma(a-b)}{\Gamma(a)\Gamma(c-b)}(-z)^{-b}\,_2F_1\left(b,1-c+b,1-a+b;\frac{1}{z}\right).\end{align}
For further properties of the Gamma function, the Digamma function
and hypergeometric functions see \cite[Chapters 6 and
15]{abram-stegun-book}. We also need the following  two lemmas.
\begin{lemma}\label{lm3.1}
$$\lim_{x\longrightarrow\infty}\frac{1}{x}\sum_{n=1}^{\infty}\frac{1}{n}\left(\frac{x}{1+x}\right)^{n}=0.$$
\end{lemma}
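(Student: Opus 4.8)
The plan is to recognize the series in closed form. For $x>0$ put $t:=\frac{x}{1+x}$, so that $t\in(0,1)$ and in particular $|t|<1$, which means the Mercator series
$$-\ln(1-t)=\sum_{n=1}^{\infty}\frac{t^{n}}{n}$$
converges and is valid. Substituting $t=\frac{x}{1+x}$ and noting $1-t=\frac{1}{1+x}$ yields
$$\sum_{n=1}^{\infty}\frac{1}{n}\left(\frac{x}{1+x}\right)^{n}=-\ln\!\left(\frac{1}{1+x}\right)=\ln(1+x).$$
Hence the expression whose limit we must compute is exactly $\frac{\ln(1+x)}{x}$.

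It then remains to invoke the elementary fact that $\frac{\ln(1+x)}{x}\longrightarrow 0$ as $x\longrightarrow\infty$; this follows at once from L'H\^{o}pital's rule, or from the crude bound $\ln(1+x)\leq\sqrt{x}$ for $x$ large. I expect no genuine obstacle here: the only point requiring (minimal) care is that $t=\frac{x}{1+x}$ stays strictly below $1$ for every finite $x>0$, so that termwise summation of the logarithmic series is legitimate and the closed form $\ln(1+x)$ holds for all $x>0$, not merely asymptotically.
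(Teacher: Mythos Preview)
Your argument is correct. Recognizing the series as the Mercator expansion of $-\ln(1-t)$ at $t=\frac{x}{1+x}$ immediately gives the closed form $\ln(1+x)$, after which the limit $\frac{\ln(1+x)}{x}\to 0$ is elementary. The only delicate point, which you address, is that $t<1$ for every finite $x>0$, so the series identity is valid throughout.

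The paper proceeds differently. Rather than summing the series in closed form, it bounds each term: from the binomial expansion one has $(1+x)^{n}\geq n x^{n-1}$, whence
\[
\frac{1}{x}\cdot\frac{1}{n}\left(\frac{x}{1+x}\right)^{n}\leq\frac{1}{x}\cdot\frac{1}{n}\cdot\frac{x}{n}=\frac{1}{n^{2}},
\]
and the dominated convergence theorem (with the summable dominant $1/n^{2}$) lets one pass the limit inside the sum term by term. Your route is shorter and yields an exact identity rather than just a bound; the paper's route avoids having to recognize any special function and is the kind of argument that transplants more readily to series without a nice closed form, such as those handled later in the same proof via the same DCT device.
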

\begin{proof}
First, note that for $x\geq0$, by the binomial expansion of
$(1+x)^{n}$, we have
$$(1+x)^{n}\geq n x^{n-1}\quad\textrm{for all}\ n\in\N.$$ Now, the desired result follows by
the dominated convergence theorem.
\end{proof}
\begin{lemma}\label{lm3.2} Let $\alpha:\R\longrightarrow(0,1)\cup(1,2)$ be an
arbitrary function. Then for every $R\geq0$ we have
$$\lim_{x\longrightarrow\infty}\frac{1}{1-\alpha(x)}\left(1-\left(\frac{x}{x+R}\right)^{1-\alpha(x)}\right)=0.$$
\end{lemma}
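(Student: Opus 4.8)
The plan is to reduce the claim to an elementary estimate on $|e^{v}-1|$ for small $v$, uniform in the exponent. First I would observe that if $R=0$ the quantity inside the limit vanishes identically, so it suffices to treat $R>0$. Put $\gamma(x):=1-\alpha(x)$; since $\alpha(x)\in(0,1)\cup(1,2)$ we have $\gamma(x)\in(-1,0)\cup(0,1)$, hence $0<|\gamma(x)|<1$ for every $x$. Writing $\left(x/(x+R)\right)^{\gamma(x)}=\exp\!\left(\gamma(x)\ln\frac{x}{x+R}\right)$, the expression in the lemma equals
$$\frac{1}{\gamma(x)}\left(1-\exp\!\left(\gamma(x)\ln\frac{x}{x+R}\right)\right).$$

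Next I would invoke the elementary inequality $|e^{v}-1|\leq 2|v|$, valid for $|v|\leq 1$, which follows from $|e^{v}-1|\leq\sum_{n\geq1}|v|^{n}/n!\leq(e-1)|v|$. Since $\left|\ln\frac{x}{x+R}\right|=\ln\!\left(1+\frac{R}{x}\right)\to0$ as $x\to\infty$, there is some $x_{0}$ with $\left|\gamma(x)\ln\frac{x}{x+R}\right|\leq\ln\!\left(1+\frac{R}{x}\right)\leq1$ for all $x\geq x_{0}$, using $|\gamma(x)|<1$. Applying the inequality with $v=\gamma(x)\ln\frac{x}{x+R}$ then gives, for all $x\geq x_{0}$,
$$\left|\frac{1}{\gamma(x)}\left(1-\left(\frac{x}{x+R}\right)^{\gamma(x)}\right)\right|\leq\frac{2\left|\gamma(x)\ln\frac{x}{x+R}\right|}{|\gamma(x)|}=2\ln\!\left(1+\frac{R}{x}\right),$$
and since the right-hand side tends to $0$ as $x\to\infty$, the squeeze theorem yields the lemma.

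The single point requiring a little care is that $\alpha(x)$, and hence $\gamma(x)$, is an \emph{arbitrary} function, so $\gamma(x)$ may oscillate and, in particular, approach $0$; this is exactly why I would route the estimate through $e^{v}-1$ rather than through a Taylor expansion of $t\mapsto(1-t)^{\gamma}$ in the variable $\gamma$. The final bound $2\ln(1+R/x)$ does not involve $\gamma(x)$ at all, so this causes no real difficulty, and I do not anticipate any genuine obstacle here; the lemma is a uniform-smallness statement used to control error terms arising in the computation of $\mathcal{A}^{\alpha}V(x)$ in the proofs of Theorems~\ref{tm1.1} and~\ref{tm1.2}.
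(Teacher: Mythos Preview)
Your proof is correct and rests on the same elementary inequality as the paper's, namely the uniform bound $\frac{1-u^{t}}{t}\leq -2\ln u$ for $|t|<1$ and $u\in(0,1)$, which is exactly your $|e^{v}-1|\leq 2|v|$ in disguise. The only difference is packaging: the paper first fixes an auxiliary $\varepsilon$ with $x/(x+R)\geq 1-\varepsilon$, bounds by $-2\ln(1-\varepsilon)$, and then sends $\varepsilon\to 0$, whereas you apply the bound directly with $u=x/(x+R)$ to obtain $2\ln(1+R/x)\to 0$ in one step.
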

\begin{proof}  Let
$0<\varepsilon<1$ be arbitrary. Since
$$\frac{1}{t}\left(1-\left(1-\varepsilon\right)^{t}\right)\leq-2\ln(1-\varepsilon)$$
holds for all $t\in(-1,0)\cup(0,1)$, we have
\begin{align*}0\leq\limsup_{x\longrightarrow\infty}\frac{1}{1-\alpha(x)}\left(1-\left(\frac{x}{x+R}\right)^{1-\alpha(x)}\right)&\leq\limsup_{x\longrightarrow\infty}\frac{1-\left(1-\varepsilon\right)^{1-\alpha(x)}}{1-\alpha(x)}\leq
                                                      -2\ln(1-\varepsilon).\end{align*}
                                                      Now, by letting
                                                      $\varepsilon\longrightarrow0$,
                                                      we have the
                                                      claim.
\end{proof}
\begin{proof}[\textbf{Proof of  Theorem} \ref{tm1.1} (i)]
                                                       The proof is divided in four steps.

\textbf{Step 1.} In the first step we explain our strategy of the
proof.  Let $\varphi\in C^{2}(\R)$ be an arbitrary nonnegative
function such that $\varphi(x)=|x|$, for $|x|>1$, and
$\varphi(x)\leq|x|$, for $|x|\leq1.$ Now, let us define the function
$V:\R\longrightarrow\R_+$ by the formula
$$V(x):=\ln(1+\varphi(x)).$$
Clearly, $V\in C^{2}(\R)$ and the level set  $C_V(r):=\{x:V(x)\leq
r\}$ is a compact set for all levels $r\geq0.$ Furthermore, it is
easy to see that
$$\left|\int_{\{|y|>1\}}\left(V(x+y)-V(x)\right)\frac{c(x)}{|y|^{\alpha(x)+1}}dy\right|<\infty$$
holds for all $x\in\R$. Hence, by the relation (\ref{eq:2.1}),
$V\in\mathcal{D}(\tilde{\mathcal{A}})$ and for the function
$\tilde{\mathcal{A}}V(x)$ we can take the function
$\mathcal{A}^{\alpha}V(x)$, where
                                                    $\mathcal{A}^{\alpha}$ is
                                                    the
                                                    infinitesimal
                                                    generator of the
                                                    stable-like
                                                    process
                                                    $\process{X^{\alpha}}$
                                                    given by (\ref{eq:1.1}).
In the sequel we  show that there exists $r_0>0$, large enough, such
that $\tilde{\mathcal{A}}V(x)\leq0$ for all $x\in
\left(C_V(r_0)\right)^{c}.$ Clearly, $\sup_{x\in
C_V(r_0)}|\tilde{\mathcal{A}}V(x)|<\infty.$ Thus, the desired result
follows from Theorem \ref{tm2.3} (i). In order to see this, since
$C_V(r)\uparrow\R$, when $r\nearrow\infty$, it suffices to show that
\begin{align}\label{eq:3.8}\displaystyle\limsup_{|x|\longrightarrow\infty}\frac{\alpha(x)}{c(x)}(1+|x|)^{\alpha(x)}\tilde{\mathcal{A}}V(x)<0.\end{align}

\textbf{Step 2.} In the second step, we find  more appropriate
expression for the function $\tilde{\mathcal{A}}V(x).$ We have
\begin{align*}\tilde{\mathcal{A}}V(x)=\mathcal{A}^{\alpha}V(x)=&\beta(x)V'(x)+\int_{\R}\left(V(x+y)-V(x)-V'(x)y1_{\{|y|\leq1\}}(y)\right)\frac{c(x)}{|y|^{\alpha(x)+1}}dy\nonumber\\
=&\beta(x)V'(x)+\int_{\{|y|\leq1\}}\left(V(x+y)-V(x)-V'(x)y\right)\frac{c(x)}{|y|^{\alpha(x)+1}}dy\\&+\int_{\{|y|>1\}}\left(V(x+y)-V(x)\right)\frac{c(x)}{|y|^{\alpha(x)+1}}dy.
\end{align*}
Let us define \begin{align*} A(x):&=\beta(x)V'(x)\\
B(x):&=\int_{\{|y|\leq1\}}\left(V(x+y)-V(x)-V'(x)y\right)\frac{c(x)}{|y|^{\alpha(x)+1}}dy\\
C(x):&=\int_{\{|y|>1\}}\left(V(x+y)-V(x)\right)\frac{c(x)}{|y|^{\alpha(x)+1}}dy.\end{align*}
Hence, in order to prove (\ref{eq:3.8}) it suffices to prove
\begin{align}\label{eq:3.9}&\limsup_{|x|\longrightarrow\infty}\frac{\alpha(x)}{c(x)}(1+|x|)^{\alpha(x)}\tilde{\mathcal{A}}V(x)=\limsup_{|x|\longrightarrow\infty}\frac{\alpha(x)}{c(x)}(1+|x|)^{\alpha(x)}(A(x)+B(x)+C(x))\nonumber\\
&\leq\limsup_{|x|\longrightarrow\infty}\frac{\alpha(\it{x})}{c(\it{x})}(1+|x|)^{\alpha(x)}\left(\rm
\it{A}(\it{x})+\it{C}(\it{x})\right)+\limsup_{|\it{x}|\longrightarrow\infty}\frac{\alpha(\it{x})}{c(x)}(\rm{1}+|\it{x}|)^{\alpha(\it{x})}B(\it{x})<\rm{0}.\end{align}
Furthermore, for  $x>0$ large enough we have
\begin{align*}
A(x)&=\frac{\beta(x)}{1+x},\\
B(x)&=\int_{\{|y|\leq1\}}\left(\ln(1+x+y)-\ln(1+x)-\frac{y}{1+x}\right)\frac{c(x)}{|y|^{\alpha(x)+1}}dy\\
&=\int_{-1}^{1}\left(\ln\left(1+\frac{y}{1+x}\right)-\frac{y}{1+x}\right)\frac{c(x)}{|y|^{\alpha(x)+1}}dy
\end{align*}
and

\begin{align*}C(x)=&\int_{\{y\leq-x-1\}}\left(\ln(1-x-y)-\ln(1+x)\right)\frac{c(x)}{|y|^{\alpha(x)+1}}dy\\
&+\int_{\{-x-1\leq y\leq-x+1\}}\left(\ln(1+\varphi(x+y))-\ln(1+x)\right)\frac{c(x)}{|y|^{\alpha(x)+1}}dy\\
&+\int_{\{-x+1\leq
y<-1\}}\left(\ln(1+x+y)-\ln(1+x)\right)\frac{c(x)}{|y|^{\alpha(x)+1}}dy\\
&+\int_{\{y>1\}}\left(\ln(1+x+y)-\ln(1+x)\right)\frac{c(x)}{y^{\alpha(x)+1}}dy\\
=&\int_{1+x}^{\infty}\ln\left(\frac{1-x+y}{1+x}\right)\frac{c(x)}{y^{\alpha(x)+1}}dy\\
&+\int_{x-1}^{1+x}\ln\left(\frac{1+\varphi(x-y)}{1+x}\right)\frac{c(x)}{y^{\alpha(x)+1}}dy\\
&+\int_{1}^{x-1}\ln\left(1-\frac{y}{1+x}\right)\frac{c(x)}{y^{\alpha(x)+1}}dy\\
&+\int_{1}^{\infty}\ln\left(1+\frac{y}{1+x}\right)\frac{c(x)}{y^{\alpha(x)+1}}dy.\end{align*}

\textbf{Step 3.} In the third step, we compute
$\limsup_{x\longrightarrow\infty}\frac{\alpha(x)}{c(x)}(1+x)^{\alpha(x)}B(x).$
By using the elementary inequality $t-t^{2}\leq\ln(1+t)\leq t$, we
get
$$-\frac{2c(x)}{(2-\alpha(x))(1+x)^{2}}=-\frac{c(x)}{(1+x)^{2}}\int_{-1}^{1}|y|^{1-\alpha(x)}dy\leq
B(x)\leq0.$$ Hence,
\begin{align}\label{eq:3.10}\lim_{x\longrightarrow\infty}\frac{\alpha(x)}{c(x)}(1+x)^{\alpha(x)}B(x)=0.\end{align}

\textbf{Step 4.} In the fourth step, we compute
$\limsup_{x\longrightarrow\infty}\frac{\alpha(\it{x})}{c(\it{x})}(1+x)^{\alpha(x)}\left(\it{A}(\it{x})+\it{C}(\it{x})\right).$
In the case when $\alpha(x)=1$,  we have
\begin{align*}
\frac{1+x}{c(x)}C(x)=&\ln\left(\frac{1}{1+x}\right)+\frac{\ln\left(\frac{1+x}{4}\right)}{x-1}+\frac{x}{x-1}\ln(1+x)\\
&+(1+x)\int_{x-1}^{1+x}\ln\left(\frac{1+\varphi(x-y)}{1+x}\right)\frac{dy}{y^{2}}-\ln(x-1)+x\ln(x)\\
&-\frac{\ln\left(\frac{4}{1+x}\right)}{x-1}+\frac{\ln(1+x)}{x-1}+\frac{x}{x-1}\ln(1+x)-\frac{x^{2}}{x-1}\ln(1+x)\\
&+\ln(2+x)+(1+x)\ln\left(1+\frac{1}{1+x}\right).\end{align*} Now,
since
$$\frac{2}{x-1}\ln\left(\frac{1}{x-1}\right)\leq(1+x)\int_{x-1}^{1+x}\ln\left(\frac{1+\varphi(x-y)}{1+x}\right)\frac{dy}{y^{2}}\leq\frac{2}{x-1}\ln\left(\frac{2}{x-1}\right),$$
by elementary computation we get
\be\label{eq:3.11}\lim_{x\longrightarrow\infty}\frac{1+x}{c(x)}C(x)=0.\ee
Further, in the case when $\alpha(x)\neq1$, using integration by
parts formula and (\ref{eq:3.6}),   we get
\begin{align*}
&\frac{\alpha(x)}{c(x)}(1+x)^{\alpha(x)}C(x)\\
&=\ln\left(\frac{2}{1+x}\right)+\frac{\,_2F_1\left(1,\alpha(x),1+\alpha(x);\frac{x-1}{1+x}\right)}{\alpha(x)}\\
&\ \ \ +\alpha(x)(1+x)^{\alpha(x)}\int_{x-1}^{1+x}\ln\left(\frac{1+\varphi(x-y)}{1+x}\right)\frac{dy}{y^{\alpha(x)+1}}\\
&\ \ \ +(1+x)^{\alpha(x)}\ln\left(1-\frac{1}{1+x}\right)-(1+x)^{\alpha(x)}(x-1)^{-\alpha(x)}\ln\left(\frac{2}{1+x}\right)\\
&\ \ \ +\frac{\,_2F_1\left(1,1-\alpha(x),2-\alpha(x);\frac{x-1}{1+x}\right)}{(\alpha(x)-1)(1+x)^{1-\alpha(x)}(x-1)^{\alpha(x)-1}}-\frac{\,_2F_1\left(1,1-\alpha(x),2-\alpha(x);\frac{1}{1+x}\right)}{(\alpha(x)-1)(1+x)^{1-\alpha(x)}}\\
&\ \ \
+(1+x)^{\alpha(x)}\ln\left(1+\frac{1}{1+x}\right)+\frac{\,_2F_1\left(1,\alpha(x),1+\alpha(x);-x-1\right)}{\alpha(x)(1+x)^{-\alpha(x)}}.\end{align*}
Let us put
\begin{align*}
C_1(x):=&\ln\left(\frac{2}{1+x}\right)-(1+x)^{\alpha(x)}(x-1)^{-\alpha(x)}\ln\left(\frac{2}{1+x}\right)\\
C_2(x):=&(1+x)^{\alpha(x)}\ln\left(1-\frac{1}{1+x}\right)+(1+x)^{\alpha(x)}\ln\left(1+\frac{1}{1+x}\right)\\
C_3(x):=&\alpha(x)(1+x)^{\alpha(x)}\int_{x-1}^{1+x}\ln\left(\frac{1+\varphi(x-y)}{1+x}\right)\frac{dy}{y^{\alpha(x)+1}}\\
C_4(x):=&\frac{\,_2F_1\left(1,\alpha(x),1+\alpha(x);-x-1\right)}{\alpha(x)(1+x)^{-\alpha(x)}}-\frac{\,_2F_1\left(1,1-\alpha(x),2-\alpha(x);\frac{1}{1+x}\right)}{(\alpha(x)-1)(1+x)^{1-\alpha(x)}}\\
C_5(x):=&\frac{\,_2F_1\left(1,1-\alpha(x),2-\alpha(x);\frac{x-1}{1+x}\right)}{(\alpha(x)-1)(1+x)^{1-\alpha(x)}(x-1)^{\alpha(x)-1}}+\frac{\,_2F_1\left(1,\alpha(x),1+\alpha(x);\frac{x-1}{1+x}\right)}{\alpha(x)}.
\end{align*}
Thus,
\begin{align}\label{eq:3.12}\frac{\alpha(x)}{c(x)}(1+x)^{\alpha(x)}C(x)=C_1(x)+C_2(x)+C_3(x)+C_4(x)+C_5(x).\end{align}
 Now, by applying Lemma \ref{lm3.2}, it is easy to see that
\begin{align}\label{eq:3.13}\lim_{x\longrightarrow\infty}C_1(x)=0,\end{align}
and
\begin{align}\label{eq:3.14}\lim_{x\longrightarrow\infty}C_2(x)=0.\end{align}
Further, since $0\leq\varphi(y)\leq1$, for $|y|\leq1$, we have
$$\ln\left(\frac{1}{1+x}\right)\left(\left(\frac{1+x}{x-1}\right)^{\alpha(x)}-1\right)\leq C_3(x)\leq\ln\left(\frac{2}{1+x}\right)\left(\left(\frac{1+x}{x-1}\right)^{\alpha(x)}-1\right).$$
Again by Lemma \ref{lm3.2}, it follows that
\begin{align}\label{eq:3.15}\lim_{x\longrightarrow\infty}C_3(x)=0.\end{align}
Further, by applying  (\ref{eq:3.1}) and (\ref{eq:3.7})  we get
 \begin{align}\label{eq:3.16}C_4(x)=&\frac{\,_2F_1\left(1,1-\alpha(x),2-\alpha(x);-\frac{1}{1+x}\right)}{(\alpha(x)-1)(1+x)^{1-\alpha(x)}}+\frac{\pi}{\sin(\alpha(x)\pi)}\nonumber\\&-\frac{\,_2F_1\left(1,1-\alpha(x),2-\alpha(x);\frac{1}{1+x}\right)}{(\alpha(x)-1)(1+x)^{1-\alpha(x)}}.
\end{align}
Now, since
$0<\inf\{\alpha(x):x\in\R\}\leq\sup\{\alpha(x):x\in\R\}<2$, by
(\ref{eq:3.5}), we have
\begin{align}\label{eq:3.17}\lim_{x\longrightarrow\infty}\left(\frac{\,_2F_1\left(1,1-\alpha(x),2-\alpha(x);-\frac{1}{x+1}\right)}{(\alpha(x)-1)(x+1)^{1-\alpha(x)}}-\frac{\,_2F_1\left(1,1-\alpha(x),2-\alpha(x);\frac{1}{x+1}\right)}{(\alpha(x)-1)(x+1)^{1-\alpha(x)}}\right)=0.\end{align}
Next,
\begin{align}\label{eq:3.18}C_5(x)=&\frac{\,_2F_1\left(1,1-\alpha(x),2-\alpha(x);\frac{x-1}{x+1}\right)}{(\alpha(x)-1)(x+1)^{1-\alpha(x)}(x-1)^{\alpha(x)-1}}-\frac{\,_2F_1\left(1,1-\alpha(x),2-\alpha(x);\frac{x-1}{x+1}\right)}{\alpha(x)-1}\nonumber\\
&+\frac{\,_2F_1\left(1,1-\alpha(x),2-\alpha(x);\frac{x-1}{x+1}\right)}{\alpha(x)-1}
+\frac{\,_2F_1\left(1,\alpha(x),1+\alpha(x);\frac{x-1}{x+1}\right)}{\alpha(x)}.
\end{align}
Again, since
$0<\inf\{\alpha(x):x\in\R\}\leq\sup\{\alpha(x):x\in\R\}<2$, by
applying (\ref{eq:3.5}) and Lemma \ref{lm3.1}, we get
\begin{align}\label{eq:3.19}\lim_{x\longrightarrow\infty}\left(\frac{\,_2F_1\left(1-\alpha(x),1,2-\alpha(x);\frac{x-1}{x+1}\right)}{(\alpha(x)-1)(x+1)^{1-\alpha(x)}(x-1)^{\alpha(x)-1}}-\frac{\,_2F_1\left(1-\alpha(x),1,2-\alpha(x);\frac{x-1}{x+1}\right)}{\alpha(x)-1}\right)=0,\end{align}
and from (\ref{eq:3.5}) we get
\begin{align*}&\frac{\,_2F_1\left(1-\alpha(x),1,2-\alpha(x);\frac{x-1}{x+1}\right)}{\alpha(x)-1}
+\frac{\,_2F_1\left(1,\alpha(x),1+\alpha(x);\frac{x-1}{x+1}\right)}{\alpha(x)}\\
&=\frac{1}{\alpha(x)-1}\sum_{n=0}^{\infty}\frac{1-\alpha(x)}{1-\alpha(x)+n}\left(\frac{x-1}{x+1}\right)^{n}+\frac{1}{\alpha(x)}\sum_{n=0}^{\infty}\frac{\alpha(x)}{\alpha(x)+n}\left(\frac{x-1}{x+1}\right)^{n}\\
&=\frac{1}{\alpha(x)-1}+\frac{1}{\alpha(x)}+\sum_{n=1}^{\infty}\frac{1-2\alpha(x)}{(\alpha(x)+n)(1-\alpha(x)+n)}\left(\frac{x-1}{x+1}\right)^{n}\\
&=\frac{1}{\alpha(x)-1}+\frac{1}{\alpha(x)}+\sum_{n=1}^{\infty}\frac{1-2\alpha(x)}{(\alpha(x)+n)(1-\alpha(x)+n)}\\
&\ \ \
+\sum_{n=1}^{\infty}\frac{1-2\alpha(x)}{(\alpha(x)+n)(1-\alpha(x)+n)}\left(\left(\frac{x-1}{x+1}\right)^{n}-1\right).\end{align*}
Clearly, by the dominated convergence theorem we have
\begin{align}\label{eq:3.20}\lim_{x\longrightarrow\infty}\sum_{n=1}^{\infty}\frac{1-2\alpha(x)}{(\alpha(x)+n)(1-\alpha(x)+n)}\left(\left(\frac{x-1}{x+1}\right)^{n}-1\right)=0,\end{align}
and by using the Taylor series expansion of the function $\rm ctg
(\it{y})$ we get
\begin{align}\label{eq:3.21}\frac{1}{\alpha(x)-1}+\frac{1}{\alpha(x)}+\sum_{n=1}^{\infty}\frac{1-2\alpha(x)}{(\alpha(x)+n)(1-\alpha(x)+n)}=\pi\rm
ctg\left(\frac{\pi\alpha(\it{x})}{2}\right).\end{align} Now, by
combining (\ref{eq:3.9}) - (\ref{eq:3.21}) we get
\begin{align}\label{eq:3.22}&\limsup_{x\longrightarrow\infty}\frac{\alpha(\it{x})}{c(\it{x})}(1+x)^{\alpha(x)}\left(\rm
\it{A}(\it{x})+\it{C}(\it{x})\right)\nonumber\\
&=\limsup_{x\longrightarrow\infty}\left(\frac{\alpha(\it{x})}{c(\it{x})}(1+x)^{\alpha(x)-1}\beta(x)+\pi\rm{ctg}\left(\frac{\pi\alpha(\it{x})}{\rm{2}}\right)\right).\end{align}
Finally, by combining (\ref{eq:3.8}), (\ref{eq:3.9}),
(\ref{eq:3.22}) and  assumption  (\ref{eq:1.2}) we get
\begin{align*}\limsup_{x\longrightarrow\infty}\frac{\alpha(x)}{c(x)}(1+x)^{\alpha(x)}\tilde{\mathcal{A}}V(x)<0.\end{align*}
The case when $x<0$ is treated in the same way. Therefore,
 we have proved the desired result.
\end{proof}

\begin{proof}[\textbf{Proof of  Theorem} \ref{tm1.1} (ii)]
The proof is divided  in three steps.

\textbf{Step 1.} In the first step we explain our strategy of the
proof. Let $\varphi\in C^{2}(\R)$ be an arbitrary nonnegative
function such that $\varphi(x)=|x|$, for $|x|>1$, and
$\varphi(x)\leq|x|$, for $|x|\leq1.$  Let $\theta\in(0,1)$ be
arbitrary and  let us define the function $V:\R\longrightarrow\R_+$
by the formula
$$V(x):=1-(1+\varphi(x))^{-\theta}.$$
Clearly, $V\in C^{2}(\R)$ and the level set  $C_V(r)=\{x:V(x)\leq
r\}$ is a compact set for all levels $0\leq r<1.$ Furthermore, since
the function $V(x)$ is bounded
$$\left|\int_{\{|y|>1\}}\left(V(x+y)-V(x)\right)\frac{c(x)}{|y|^{\alpha(x)+1}}dy\right|<\infty$$
holds for all $x\in\R.$ Hence, by the relation (\ref{eq:2.1}),
$V\in\mathcal{D}(\tilde{\mathcal{A}})$ and for the function
$\tilde{\mathcal{A}}V(x)$ we can take the function
$\mathcal{A}^{\alpha}V(x)$, where
                                                    $\mathcal{A}^{\alpha}$
                                                    is again
                                                    the
                                                    infinitesimal
                                                    generator of the
                                                    stable-like
                                                    process
                                                    $\process{X^{\alpha}}$
                                                    given by (\ref{eq:1.1}).
 In the sequel we show that  there exists $0<r_0<1$, such that $\tilde{\mathcal{A}}V(x)\geq0$ for all $x\in
\left(C_V(r_0)\right)^{c}.$ Clearly, $\sup_{x\in
C_V(r_0)}|\tilde{\mathcal{A}}V(x)|<\infty.$ Thus, the desired result
follows from Theorem \ref{tm2.3} (ii). Note that  for the sets
$C,D\subseteq\R$, defined in Theorem \ref{tm2.3} (ii), we can take
$C:=C_V(r_0)$ and $D$ is an arbitrary closed set satisfying
$D\subseteq C^{c}$ and $\lambda(D)>0$.  Now,  from  the continuity
of the function $V(x)$, we have
$$\sup_{x\in C}V(x)<\inf_{x\in D} V(x).$$
In order to prove the existence of such $r_0$, since
 $C_V(r)\uparrow\R$,
when $r\nearrow1$, it suffices to show that
\begin{align}\label{eq:3.23}\liminf_{|x|\longrightarrow\infty}\frac{\alpha(x)}{\theta
c(x)}(1+|x|)^{\alpha(x)+\theta}\tilde{\mathcal{A}}V(x)>0.\end{align}

\textbf{Step 2.} In the second step we find  more appropriate
expression for the function $\tilde{\mathcal{A}}V(x).$  We have
\begin{align*}\tilde{\mathcal{A}}V(x)=\mathcal{A}^{\alpha}V(x)
=&\beta(x)V'(x)+\int_{\{|y|\leq1\}}\left(V(x+y)-V(x)-V'(x)y\right)\frac{c(x)}{|y|^{\alpha(x)+1}}dy\\&+\int_{\{|y|>1\}}\left(V(x+y)-V(x)\right)\frac{c(x)}{|y|^{\alpha(x)+1}}dy.
\end{align*}
Let us define \begin{align*} A(x):&=\beta(x)V'(x)\\
B(x):&=\int_{\{|y|\leq1\}}\left(V(x+y)-V(x)-V'(x)y\right)\frac{c(x)}{|y|^{\alpha(x)+1}}dy\\
C(x):&=\int_{\{|y|>1\}}\left(V(x+y)-V(x)\right)\frac{c(x)}{|y|^{\alpha(x)+1}}dy.\end{align*}
Hence, in order to prove (\ref{eq:3.23}) it suffices to prove
\begin{align}\label{eq:3.24}&\liminf_{|x|\longrightarrow\infty}\frac{\alpha(x)}{\theta c(x)}(1+|x|)^{\alpha(x)+\theta}(A(x)+B(x)+C(x))>\rm{0}.\end{align}
Furthermore, for  $x>0$ large enough  we have
\begin{align*}
A(x)&=\theta\beta(\it{x})(\rm{1}+\it{x})^{-\theta-\rm{1}}\\
B(x)&=\int_{-1}^{1}\left((1+x)^{-\theta}-(1+x+y)^{-\theta}-\theta
(1+x)^{-\theta-1}y\right)\frac{c(x)}{|y|^{\alpha(x)+1}}dy
\end{align*}
and
\begin{align*}C(x)=&\int_{\{|y|>1\}}\left((1+x)^{-\theta}-(1+\varphi(x+y))^{-\theta}\right)\frac{c(x)}{|y|^{\alpha(x)+1}}dy\\
=&(1+x)^{-\theta}\int_{-\infty}^{-x-1}\left(1-\left(\frac{1-x-y}{1+x}\right)^{-\theta}\right)\frac{c(x)}{|y|^{\alpha(x)+1}}dy\\
&+(1+x)^{-\theta}\int_{-x-1}^{-x+1}\left(1-\left(\frac{1+\varphi(x+y)}{1+x}\right)^{-\theta}\right)\frac{c(x)}{|y|^{\alpha(x)+1}}dy\\
&+(1+x)^{-\theta}\int_{-x+1}^{-1}\left(1-\left(1+\frac{y}{1+x}\right)^{-\theta}\right)\frac{c(x)}{|y|^{\alpha(x)+1}}dy\\
&+(1+x)^{-\theta}\int_{1}^{\infty}\left(1-\left(1+\frac{y}{1+x}\right)^{-\theta}\right)\frac{c(x)}{|y|^{\alpha(x)+1}}dy.\end{align*}
By restricting the function $1-(1+t)^{-\theta}$ to intervals
$(-1,1)$ and $[1,\infty)$, and using its Taylor expansion, that is,
$$1-(1+t)^{-\theta}=-\displaystyle\sum_{i=1}^{\infty}{-\theta \choose i} t^{i},$$
for $t\in(-1,1)$, where $${-\theta \choose
i}=\frac{-\theta(-\theta-1)\cdots(-\theta-i+1)}{i!},$$ we get
\begin{align*}C(x)=&(1+x)^{-\theta}\int_{1+x}^{\infty}\left(1-\left(\frac{1-x+y}{1+x}\right)^{-\theta}\right)\frac{c(x)}{y^{\alpha(x)+1}}dy\\
&+(1+x)^{-\theta}\int_{x-1}^{1+x}\left(1-\left(\frac{1+\varphi(x-y)}{1+x}\right)^{-\theta}\right)\frac{c(x)}{y^{\alpha(x)+1}}dy\\
&-(1+x)^{-\theta}\displaystyle\sum_{i=1}^{\infty}{-\theta \choose
i}\frac{(-1)^{i}c(x)}{(1+x)^{i}}\displaystyle\int_{1}^{x-1}y^{i-\alpha(x)-1}dy\\
&-(1+x)^{-\theta}\displaystyle\sum_{i=1}^{\infty}{-\theta \choose
i}\frac{c(x)}{(1+x)^{i}}\displaystyle\int_{1}^{1+x}y^{i-\alpha(x)-1}dy\\
&+(1+x)^{-\theta}\displaystyle\int_{1+x}^{\infty}\left(1-\left(1+\frac{y}{1+x}\right)^{-\theta}\right)\frac{c(x)}{y^{\alpha(x)+1}}dy.\end{align*}
Let us put
\begin{align*}
C_1(x):=&\frac{\alpha(x)}{\theta}(1+x)^{\alpha(x)}\Bigg[\int_{1+x}^{\infty}\left(1-\left(\frac{1-x+y}{1+x}\right)^{-\theta}\right)\frac{dy}{y^{\alpha(x)+1}}\\
&\hspace{3cm}+\int_{1+x}^{\infty}\left(1-\left(1+\frac{y}{1+x}\right)^{-\theta}\right)\frac{dy}{y^{\alpha(x)+1}}\Bigg]\\
C_2(x):=&\frac{\alpha(x)}{\theta}(1+x)^{\alpha(x)}\int_{x-1}^{1+x}\left(1-\left(\frac{1+\varphi(x-y)}{1+x}\right)^{-\theta}\right)\frac{dy}{y^{\alpha(x)+1}}\\
C_3(x):=&\alpha(x)(1+x)^{\alpha(x)-1}\int_{x-1}^{1+x}y^{-\alpha(x)}dy\\
C_4(x):=&-\frac{\alpha(x)}{\theta}(1+x)^{\alpha(x)}\Bigg[\displaystyle\sum_{i=2}^{\infty}{-\theta
\choose
i}\frac{(-1)^{i}}{(1+x)^{i}}\displaystyle\int_{1}^{x-1}y^{i-\alpha(x)-1}dy\\&+\displaystyle\sum_{i=2}^{\infty}{-\theta
\choose
i}\frac{1}{(1+x)^{i}}\displaystyle\int_{1}^{1+x}y^{i-\alpha(x)-1}dy\Bigg].\end{align*}
 Hence, we find
\be\label{eq:3.25}\frac{\alpha(x)}{\theta
c(x)}(1+x)^{\alpha(x)+\theta}C(x)= C_1(x)+ C_2(x)+C_3(x)+C_4(x).\ee
 Further, by (\ref{eq:3.6}), we have
\begin{align*}C_1(x)=&\frac{2}{\theta}-\frac{\alpha(x)\,
_2F_1\left(\theta,\alpha(x)+\theta,1+\alpha(x)+\theta,-1\right)+\alpha(x)\,
_2F_1\left(\theta,\alpha(x)+\theta,1+\alpha(x)+\theta,1\right)}{\theta(\alpha(x)+\theta)}\\&-
\frac{\alpha(x)\,
_2F_1\left(\theta,\alpha(x)+\theta,1+\alpha(x)+\theta,\frac{x-1}{1+x}\right)-\alpha(x)\,
_2F_1\left(\theta,\alpha(x)+\theta,1+\alpha(x)+\theta,1\right)}{\theta(\alpha(x)+\theta)}.
\end{align*}
Next, by elementary computation, we have
\begin{align*}C_3(x)=\frac{\alpha(x)}{1-\alpha(x)}\left(1-\left(\frac{1+x}{x-1}\right)^{\alpha(x)-1}\right)\end{align*}
in the case when $\alpha(x)\neq1$,
\begin{align*}C_3(x)=\ln\left(\frac{1+x}{x-1}\right),\end{align*} in
the case when $\alpha(x)=1,$ and
\begin{align*}C_4(x)=-\frac{\alpha(x)}{\theta}\Bigg[&\sum_{i=1}^{\infty}{-\theta
\choose 2i}\frac{2}{2i-\alpha(x)}-\sum_{i=1}^{\infty}{-\theta
\choose
2i}\frac{2}{(2i-\alpha(x))(1+x)^{2i-\alpha(x)}}\\&+\sum_{i=2}^{\infty}{-\theta
\choose
i}\frac{(-1)^{i}}{i-\alpha(x)}\left(\left(\frac{x-1}{1+x}\right)^{i-\alpha(x)}-1\right)\Bigg].\end{align*}

\textbf{Step 3.} In the third step we prove
$$\liminf_{x\longrightarrow\infty}\frac{\alpha(x)}{\theta c(x)}(1+x)^{\alpha(x)+\theta}\tilde{\mathcal{A}}V(x)=\liminf_{x\longrightarrow\infty}\frac{\alpha(x)}{\theta c(x)}(1+x)^{\alpha(x)+\theta}(A(x)+B(x)+C(x))>0.$$
First, by the mean value theorem, we have
\begin{align}\label{eq:3.26}\lim_{x\longrightarrow\infty}\frac{\alpha(x)}{\theta
c(x)}(1+x)^{\alpha(x)+\theta}B(x)=0,\end{align} and, by Lemma
\ref{lm3.2}, we have
\begin{align}\label{eq:3.27}\lim_{x\longrightarrow\infty}C_2(x)=0\end{align}
and
\begin{align}\label{eq:3.28}\lim_{x\longrightarrow\infty}C_3(x)=0.\end{align}
Further, since
$0<\inf\{\alpha(x):x\in\R\}\leq\sup\{\alpha(x):x\in\R\}<2$, from
(\ref{eq:3.5}) and  the dominated convergence theorem, it follows
\begin{align}\label{eq:3.29}\lim_{x\longrightarrow\infty}\frac{\alpha(x)\,
_2F_1\left(\theta,\alpha(x)+\theta,1+\alpha(x)+\theta,\frac{x-1}{1+x}\right)-\alpha(x)\,
_2F_1\left(\theta,\alpha(x)+\theta,1+\alpha(x)+\theta,1\right)}{\theta(\alpha(x)+\theta)}=0\end{align}
and
\begin{align}\label{eq:3.30}\lim_{x\longrightarrow\infty}\Bigg[-\sum_{i=1}^{\infty}{-\theta
\choose
2i}\frac{2}{(2i-\alpha(x))(1+x)^{2i-\alpha(x)}}+\sum_{i=2}^{\infty}{-\theta
\choose
i}\frac{(-1)^{i}}{i-\alpha(x)}\left(\left(\frac{x-1}{1+x}\right)^{i-\alpha(x)}-1\right)\Bigg]=0.\end{align}
Thus, by combining  (\ref{eq:3.24}) - (\ref{eq:3.30}), we have
\begin{align*}&\liminf_{x\longrightarrow\infty}\frac{\alpha(x)}{\theta
c(x)}(1+x)^{\alpha(x)+\theta}\tilde{\mathcal{A}}V(x)\\&
=\liminf_{x\longrightarrow\infty}\Bigg[\frac{\alpha(x)}{c(x)}(1+x)^{\alpha(x)-1}\beta(x)-\frac{\alpha(x)}{\theta}\sum_{i=1}^{\infty}{-\theta
\choose 2i}\frac{2}{2i-\alpha(x)}+\frac{2}{\theta}\\
&\hspace{2cm}-\frac{\alpha(x)\,
_2F_1\left(\theta,\alpha(x)+\theta,1+\alpha(x)+\theta,-1\right)+\alpha(x)\,
_2F_1\left(\theta,\alpha(x)+\theta,1+\alpha(x)+\theta,1\right)}{\theta(\alpha(x)+\theta)}\Bigg].
\end{align*}
Next, it can be proved that the function
$$\theta\longmapsto-\frac{\alpha}{\theta}\sum_{i=1}^{\infty}{-\theta
\choose 2i}\frac{2}{2i-\alpha}+\frac{2}{\theta}-\frac{\alpha\,
_2F_1\left(\theta,\alpha+\theta,1+\alpha+\theta,-1\right)+\alpha\,
_2F_1\left(\theta,\alpha+\theta,1+\alpha+\theta,1\right)}{\theta(\alpha+\theta)}$$
is strictly decreasing, hence we choose $\theta$ close to zero.
From, (\ref{eq:3.2}),  (\ref{eq:3.3}) and (\ref{eq:3.4}), we get
\begin{align*}&\lim_{\theta\longrightarrow0}\left(-\frac{\alpha}{\theta}\sum_{i=1}^{\infty}{-\theta
\choose 2i}\frac{2}{2i-\alpha}+\frac{2}{\theta}-\frac{\alpha\,
_2F_1\left(\theta,\alpha+\theta,1+\alpha+\theta,-1\right)+\alpha\,
_2F_1\left(\theta,\alpha+\theta,1+\alpha+\theta,1\right)}{\theta(\alpha+\theta)}\right)\\
&=\pi\rm{ctg}\left(\frac{\pi\alpha}{2}\right).\end{align*} Now, the
claim follows from condition (\ref{eq:1.3}).
 The case when $x<0$ is treated in the same way.  Therefore,
 we have proved the desired result.
\end{proof}

In the case when $\limsup_{|x|\longrightarrow\infty}\alpha(x)<1$, we
also give an alternative, more probabilistic, proof of Theorem
\ref{tm1.1} (ii).
\begin{proof}[\textbf{Proof of  Theorem} \ref{tm1.1} (ii)] Let $\chain{X}$ be a Markov chain on the real line
given by the transition kernel $p(x,dy):=f_x(y-x)dy,$ where $f_x(y)$
is the density function of the stable distribution with
characteristic exponent
$p(x;\xi)=-i\beta(x)\xi+\gamma(x)|\xi|^{\alpha(x)}.$ Hence, the
chain $\chain{X}$ jumps from the state $x$ by the stable
distribution with the density function $f_x(y)$. By
\cite[Proposition 5.5 and Theorem 1.4]{sandric-rectrans}, the chain
$\chain{X}$ is transient. Further, let $\chain{X^{m}}$, $m\in\N$, be
a sequence of Markov chains on the real line given by  transition
kernels $p_m(x,dy):=f^{m}_x(y-x)dy,$ $m\in\N$, where $f^{m}_x(y)$ is
the density function of the stable distribution with characteristic
exponent $p_m(x;\xi):=\frac{1}{m}p(x;\xi).$ Then, by
\cite[Proposition 2.13]{sandric-periodic}, all  chains
$\chain{X^{m}}$, $m\in\N$, are transient as well. Further, by
\cite{bjoern-rene-approx}, we have $$X^{m}_{[
m\cdot]}\stackrel{\hbox{\scriptsize d}}{\longrightarrow}
X^{\alpha}_{\cdot},\ \ \textrm{as}\ \ m\longrightarrow\infty,$$
where $[ x]$ denotes the integer part of $x$ and
$\stackrel{\hbox{\scriptsize d}}{\longrightarrow}$ denotes the
convergence in the space of c\`adl\`ag functions equipped with the
Skorohod topology. Hence, the stable-like process
$\process{X^{\alpha}}$ can be approximated by c\`adl\`ag ``versions"
of chains $\chain{X^{m}}$, $m\in\N$. Therefore, all  we have to show
is that this approximation keeps the transience property. But this
fact follows from \cite[Proposition 2.4]{sandric-periodic}.
\end{proof}

\begin{proof}[\textbf{Proof of  Theorem} \ref{tm1.2}]
We use a similar strategy as in Theorem  \ref{tm1.1} (ii). The proof
is divided  in three steps.

\textbf{Step 1.} In the first step we explain  our strategy of the
proof. Let $\varphi\in C^{2}(\R)$ be an arbitrary nonnegative
function such that $\varphi(x)=|x|$, for $|x|>1$, and
$\varphi(x)\leq|x|$, for $|x|\leq1$, and let $\theta\in(1,\alpha)$
be arbitrary (recall that $1<\alpha=\inf\{\alpha(x):x\in\R\}$) and
let us define the function $V:\R\longrightarrow\R_+$ by the formula
$$V(x):=(\varphi(x))^{\theta}.$$
Clearly, $V\in C^{2}(\R)$ and the level set  $C_V(r)=\{x:V(x)\leq
r\}$ is a compact set for all levels $r\geq0.$ Furthermore, since
$\theta<\inf\{\alpha(x):x\in\R\}$, we have
$$\left|\int_{\{|y|>1\}}\left(V(x+y)-V(x)\right)\frac{c(x)}{|y|^{\alpha(x)+1}}dy\right|<\infty$$
for all $x\in\R.$ Hence, by the relation (\ref{eq:2.1}),
$V\in\mathcal{D}(\tilde{\mathcal{A}})$ and for the function
$\tilde{\mathcal{A}}V(x)$ we can take the function
$\mathcal{A}^{\alpha}V(x)$, where
                                                    $\mathcal{A}^{\alpha}$ is
                                                    the
                                                    infinitesimal
                                                    generator of the
                                                    stable-like
                                                    process
                                                    $\process{X^{\alpha}}$
                                                    given by (\ref{eq:1.1}).

In the sequel we show that  there exists $r_0>0$, large enough, such
that $\tilde{\mathcal{A}}V(x)\leq-1$ for all $x\in
\left(C_V(r_0)\right)^{c}.$ Clearly, $\sup_{x\in
C_V(r_0)}|\tilde{\mathcal{A}}V(x)|<\infty.$ Thus, the desired result
follows from Theorem \ref{tm2.3} (iii). In order to see this, since
$C_V(r)\uparrow\R$, when $r\nearrow\infty$, it suffices to show that
\begin{align}\label{eq:3.31}\displaystyle\limsup_{\theta\longrightarrow\alpha}\displaystyle\limsup_{|x|\longrightarrow\infty}\frac{\alpha(x)}{\theta c(x)}|x|^{\alpha(x)-\theta}(\tilde{\mathcal{A}}V(x)+1)<0.\end{align}
We have
\begin{align*}\tilde{\mathcal{A}}V(x)+1=\mathcal{A}V(x)+1
=&\beta(x)V'(x)+\int_{\{|y|\leq1\}}\left(V(x+y)-V(x)-V'(x)y\right)\frac{c(x)}{|y|^{\alpha(x)+1}}dy\\&+\int_{\{|y|>1\}}\left(V(x+y)-V(x)\right)\frac{c(x)}{|y|^{\alpha(x)+1}}dy+1.
\end{align*}
Let us define \begin{align*} A(x):&=\beta(x)V'(x)+1\\
B(x):&=\int_{\{|y|\leq1\}}\left(V(x+y)-V(x)-V'(x)y\right)\frac{c(x)}{|y|^{\alpha(x)+1}}dy\\
C(x):&=\int_{\{|y|>1\}}\left(V(x+y)-V(x)\right)\frac{c(x)}{|y|^{\alpha(x)+1}}dy.\end{align*}
Hence, in order to prove (\ref{eq:3.31}) it suffices to prove
\begin{align}\label{eq:3.32}\limsup_{\theta\longrightarrow\alpha}\limsup_{|x|\longrightarrow\infty}\frac{\alpha(x)}{\theta c(x)}|x|^{\alpha(x)-\theta}(A(x)+B(x)+C(x))<0.\end{align}
Furthermore, for  $x>0$ large enough we have
\begin{align*}
A(x)&=\theta\beta(x)x^{\theta-1}+1\\
B(x)&=\int_{-1}^{1}\left((x+y)^{\theta}-x^{\theta}-\theta
x^{\theta-1}y\right)\frac{c(x)}{|y|^{\alpha(x)+1}}dy
\end{align*}
and
\begin{align*}C(x)=&\int_{\{|y|>1\}}\left((\varphi(x+y))^{\theta}-x^{\theta}\right)\frac{c(x)}{|y|^{\alpha(x)+1}}dy\\
=&x^{\theta}\int_{-\infty}^{-x-1}\left(\left(-\frac{y}{x}-1\right)^{\theta}-1\right)\frac{c(x)}{|y|^{\alpha(x)+1}}dy\\
&+x^{\theta}\int_{-x-1}^{-x+1}\left(\left(\frac{\varphi(x+y)}{x}\right)^{\theta}-1\right)\frac{c(x)}{|y|^{\alpha(x)+1}}dy\\
&+x^{\theta}\int_{-x+1}^{-1}\left(\left(1+\frac{y}{x}\right)^{\theta}-1\right)\frac{c(x)}{|y|^{\alpha(x)+1}}dy\\
&+x^{\theta}\int_{1}^{\infty}\left(\left(1+\frac{y}{x}\right)^{\theta}-1\right)\frac{c(x)}{|y|^{\alpha(x)+1}}dy.\end{align*}
By restricting the function $(1+t)^{\theta}-1$ to intervals $(-1,1)$
and $[1,\infty)$, and using its Taylor expansion, that is,
$$(1+t)^{\theta}-1=\displaystyle\sum_{i=1}^{\infty}{\theta \choose i} t^{i},$$
for $t\in(-1,1),$ we get
\begin{align*}C(x)=&x^{\theta}\int_{1+x}^{\infty}\left(\left(\frac{y}{x}-1\right)^{\theta}-1\right)\frac{c(x)}{y^{\alpha(x)+1}}dy\\
&+x^{\theta}\int_{x-1}^{1+x}\left(\left(\frac{\varphi(x-y)}{x}\right)^{\theta}-1\right)\frac{c(x)}{y^{\alpha(x)+1}}dy\\
&+x^{\theta}\displaystyle\sum_{i=1}^{\infty}{\theta \choose
i}\frac{(-1)^{i}c(x)}{x^{i}}\displaystyle\int_{1}^{x-1}y^{i-\alpha(x)-1}dy\\
&+x^{\theta}\displaystyle\sum_{i=1}^{\infty}{\theta \choose
i}\frac{c(x)}{x^{i}}\displaystyle\int_{1}^{x}y^{i-\alpha(x)-1}dy\\
&+x^{\theta}\displaystyle\int_{x}^{\infty}\left(\left(1+\frac{y}{x}\right)^{\theta}-1\right)\frac{c(x)}{y^{\alpha(x)+1}}dy.\end{align*}
Let us put
\begin{align*}
C_1(x):=&\frac{\alpha(x)}{\theta}x^{\alpha(x)}\Bigg[\int_{1+x}^{\infty}\left(\left(\frac{y}{x}-1\right)^{\theta}-1\right)\frac{dy}{y^{\alpha(x)+1}}+\int_{x}^{\infty}\left(\left(1+\frac{y}{x}\right)^{\theta}-1\right)\frac{dy}{y^{\alpha(x)+1}}\Bigg]\\
C_2(x):=&\frac{\alpha(x)}{\theta}x^{\alpha(x)}\int_{x-1}^{1+x}\left(\left(\frac{\varphi(x+y)}{x}\right)^{\theta}-1\right)\frac{dy}{y^{\alpha(x)+1}}\\
C_3(x):=&\alpha(x)x^{\alpha(x)-1}\int_{x-1}^{x}y^{-\alpha(x)}dy\\
C_4(x):=&\frac{\alpha(x)}{\theta}x^{\alpha(x)}\Bigg[\displaystyle\sum_{i=2}^{\infty}{\theta
\choose
i}\frac{(-1)^{i}}{x^{i}}\displaystyle\int_{1}^{x-1}y^{i-\alpha(x)-1}dy+\displaystyle\sum_{i=2}^{\infty}{\theta
\choose
i}\frac{1}{x^{i}}\displaystyle\int_{1}^{x}y^{i-\alpha(x)-1}dy\Bigg].\end{align*}
 Hence, we find
\be\label{eq:3.33}\frac{\alpha(x)}{\theta
c(x)}x^{\alpha(x)-\theta}C(x)= C_1(x)+ C_2(x)+C_3(x)+C_4(x).\ee
 Further, by (\ref{eq:3.6}), we have
\begin{align*}C_1(x)=&-\frac{2}{\theta}+\frac{\alpha(x)\,
_2F_1\left(-\theta,\alpha(x)-\theta,1+\alpha(x)-\theta,1\right)+\alpha(x)\,
_2F_1\left(-\theta,\alpha(x)-\theta,1+\alpha(x)-\theta,-1\right)}{\theta(\alpha(x)-\theta)}\\&+\frac{1}{\theta}\left(1-\left(\frac{x}{1+x}\right)^{\alpha(x)}\right)
-\frac{\alpha(x)\,
_2F_1\left(-\theta,\alpha(x)-\theta,1+\alpha(x)-\theta,1\right)}{\theta(\alpha(x)-\theta)}\\
&+\alpha(x)\left(\frac{x}{1+x}\right)^{\alpha(x)-\beta}
\frac{_2F_1\left(-\theta,\alpha(x)-\theta,1+\alpha(x)-\theta,\frac{x}{1+x}\right)}{\theta(\alpha(x)-\theta)}.
\end{align*}
Next, by elementary computation, we have
\begin{align*}C_3(x)=\frac{\alpha(x)}{1-\alpha(x)}\left(1-\left(\frac{x}{x-1}\right)^{\alpha(x)-1}\right)\end{align*}
and
\begin{align*}C_4(x)=\frac{\alpha(x)}{\theta}\Bigg[&\sum_{i=1}^{\infty}{\theta
\choose 2i}\frac{2}{2i-\alpha(x)}-\sum_{i=1}^{\infty}{\theta \choose
2i}\frac{2}{(2i-\alpha(x))x^{2i-\alpha(x)}}\\&+\sum_{i=2}^{\infty}{\theta
\choose
i}\frac{(-1)^{i}}{i-\alpha(x)}\left(\left(\frac{x-1}{x}\right)^{i-\alpha(x)}-1\right)\Bigg].\end{align*}

\textbf{Step 3.} In the third step we prove
$$\limsup_{\theta\longrightarrow\alpha}\limsup_{x\longrightarrow\infty}\frac{\alpha(x)}{\theta c(x)}x^{\alpha(x)-\theta}\tilde{\mathcal{A}}V(x)=\limsup_{\theta\longrightarrow\alpha}\limsup_{x\longrightarrow\infty}\frac{\alpha(x)}{\theta c(x)}x^{\alpha(x)-\theta}(A(x)+B(x)+C(x))<0.$$
First, by the mean value theorem, we have
\begin{align}\label{eq:3.34}\lim_{x\longrightarrow\infty}\frac{\alpha(x)}{\theta
c(x)}x^{\alpha(x)-\theta}B(x)=0,\end{align} and, by Lemma
\ref{lm3.2}, we have
\begin{align}\label{eq:3.35}\lim_{x\longrightarrow\infty}C_2(x)=0\end{align}
and
\begin{align}\label{eq:3.35}\lim_{x\longrightarrow\infty}C_3(x)=0.\end{align}
Further, since
$0<\inf\{\alpha(x):x\in\R\}\leq\sup\{\alpha(x):x\in\R\}<2$, from
(\ref{eq:3.5}) and  the dominated convergence theorem, it follows
\begin{align}\label{eq:3.37}&\lim_{x\longrightarrow\infty}\Bigg[\frac{1}{\theta}\left(1-\left(\frac{x}{1+x}\right)^{\alpha(x)}\right)
-\frac{\alpha(x)\,
_2F_1\left(-\theta,\alpha(x)-\theta,1+\alpha(x)-\theta,1\right)}{\theta(\alpha(x)-\theta)}\nonumber\\
&\hspace{1.3cm}+\alpha(x)\left(\frac{x}{1+x}\right)^{\alpha(x)-\beta}
\frac{_2F_1\left(-\theta,\alpha(x)-\theta,1+\alpha(x)-\theta,\frac{x}{1+x}\right)}{\theta(\alpha(x)-\theta)}\Bigg]=0\end{align}
and
\begin{align}\label{eq:3.38}\lim_{x\longrightarrow\infty}\Bigg[-\sum_{i=1}^{\infty}{\theta
\choose
2i}\frac{2}{(2i-\alpha(x))x^{2i-\alpha(x)}}+\sum_{i=2}^{\infty}{\theta
\choose
i}\frac{(-1)^{i}}{i-\alpha(x)}\left(\left(\frac{x-1}{x}\right)^{i-\alpha(x)}-1\right)\Bigg]=0.\end{align}
Thus, by combining (\ref{eq:3.32}) - (\ref{eq:3.38}), we have
\begin{align*}&\limsup_{x\longrightarrow\infty}\frac{\alpha(x)}{\theta
c(x)}x^{\alpha(x)-\theta}\tilde{\mathcal{A}}V(x)\\&
=\limsup_{x\longrightarrow\infty}\Bigg[\frac{\alpha(x)}{c(x)}x^{\alpha(x)-1}\beta(x)+\frac{\alpha(x)}{\theta
c(x)}x^{\alpha(x)-\theta}+\frac{\alpha(x)}{\theta}\sum_{i=1}^{\infty}{\theta
\choose 2i}\frac{2}{2i-\alpha(x)}-\frac{2}{\theta}\\
&\hspace{2cm}+\frac{\alpha(x)\,
_2F_1\left(-\theta,\alpha(x)-\theta,1+\alpha(x)-\theta,-1\right)+\alpha(x)\,
_2F_1\left(-\theta,\alpha(x)-\theta,1+\alpha(x)-\theta,1\right)}{\theta(\alpha(x)-\theta)}\Bigg]\\
&=\limsup_{x\longrightarrow\infty}\left(\frac{\alpha(x)}{c(x)}x^{\alpha(x)-1}\beta(x)+\frac{\alpha(x)}{
\theta c(x)}x^{\alpha(x)-\theta}+E(\alpha(x),\theta)\right).
\end{align*} Clearly, because of the term $x^{\alpha(x)-\theta}$, we
choose $\theta$ close to $\alpha$.
 Now,  from (\ref{eq:1.3}), it follows $$\limsup_{\theta\longrightarrow\alpha}\limsup_{x\longrightarrow\infty}\frac{\alpha(x)}{
 \theta c(x)}x^{\alpha(x)-\theta}\tilde{\mathcal{A}}V(x)<0.$$
 The case when $x<0$ is treated in the same way.  Therefore,
 we have proved the desired result.
\end{proof}

\begin{proof}[\textbf{Proof of  Corollary} \ref{c1.3}]
In the case when  $\alpha\neq2$, the claim easily follows from
Theorem \ref{tm1.1}. Further, in the case when $\alpha=2$, that is,
in the Brownian motion case, the corresponding infinitesimal
generator is given by $\mathcal{A}^{2}f(x)=\gamma f''(x)$ (recall
that the symbol (characteristic exponent) is given by
$p(\xi)=\gamma|\xi|^{2}$) and clearly
$C^{2}(\R)\subseteq\mathcal{D}(\tilde{\mathcal{A}}).$ Thus, for any
$f\in C^{2}(\R)$, for the function $\tilde{\mathcal{A}}f(x)$ we can
take the function $\mathcal{A}^{2}f(x)$. Now, by taking again
$V(x)=\log(1+\varphi(x))$ for the test function, where $\varphi\in
C^{2}(\R)$ is an arbitrary nonnegative function such that
$\varphi(x)=|x|$ for all $|x|>1$, we get
$$\tilde{\mathcal{A}}V(x)=\mathcal{A}^{2}V(x)=\frac{-\gamma}{(1+|x|)^{2}}$$ for all
$|x|>1$, that is,  the Brownian motion is recurrent.
\end{proof}

\begin{proof}[\textbf{Proof of  Theorem} \ref{tm2.4}]
Let $\theta\in(1,\alpha)$  be arbitrary (recall that
$1<\alpha=\inf_{x\in\R}\alpha(x)$). Further, let  there exist a
Borel measurable function $f(x)\geq1$ and strictly increasing,
nonnegative and convex function $\phi(x)$, such that
$|x|^{\theta}=\phi(f(x))$ for all $|x|$ large enough,
 then, by \cite[Theorem 5.3 (ii)]{meyn-tweedie-III}, the stable-like
 process $\process{X^{\alpha}}$ is  $f$-ergodic if
$$\tilde{\mathcal{A}}\,|x|^{\theta}\leq-f(x)$$ holds for all $|x|$ large enough.
Now, by  repeating the proof of Theorem \ref{tm1.2} and applying
condition (\ref{eq:2.2}), we have
\begin{align*}&\limsup_{|x|\longrightarrow\infty}\frac{\alpha(x)}{\theta
c(x)}|x|^{\alpha(x)-\theta}(\tilde{\mathcal{A}}V(x)+f(x))\\&=
\limsup_{|x|\longrightarrow\infty}\left(\rm
sgn(\it{x})\frac{\alpha(\it{x})}{c(\it{x})}|x|^{\alpha(x)-\rm{1}}\beta(\it{x})+\frac{\alpha(\it{x})}{\theta
c(\it{x})}|x|^{\alpha(x)-\theta}f(x)+E(\alpha(x),\theta)\right)<0.\end{align*}
Therefore, we have proved the desired result.
\end{proof}

\section*{Acknowledgement} The author would like to thank
the anonymous reviewer  for  careful reading of the paper and for
helpful comments that led to improvement of the presentation.

\bibliographystyle{alpha}
\bibliography{References}

\end{document}